\newtheorem{theorem}{Theorem}
\newtheorem{lemma}[theorem]{Lemma}
\newtheorem{remark}[theorem]{Remark}
\newtheorem{corollary}[theorem]{Corollary}
\newenvironment{proof}[1][Proof]{\textbf{#1.} }{\ \rule{0.5em}{0.5em}}
\def \R{\mbox{I\hspace{-.15em}R}}
\def \P{\mbox{I\hspace{-.15em}P}}
\def \E{\mbox{I\hspace{-.15em}E}}
\def \D{\mbox{I\hspace{-.15em}D}}
\begin{document}

\author{Auguste Aman \thanks{augusteaman5@yahoo.fr}\\
UFR de Math\'{e}matiques et Informatique,\\
22 BP 582 Abidjan 22,\ C\^{o}te d'Ivoire\\
}\date{}
\title{Representation theorems for backward\\doubly stochastic differential equations}
\maketitle

\begin{abstract}
In this paper we study the class of backward doubly stochastic
differential equations (BDSDEs, for short) whose terminal value
depends on the history of forward diffusion. We first establish a
probabilistic representation for the spatial gradient of the
stochastic viscosity solution to a quasilinear parabolic SPDE in the
spirit of the Feynman-Kac formula, without using the derivatives of
the coefficients of the corresponding BDSDE. Then such a
representation leads to a closed-form representation of the
martingale integrand of BDSDE, under only standard Lipschitz
condition on the coefficients.
\end{abstract}

\textbf{Key Words}: Adapted solution, anticipating stochastic
calculus, backward doubly SDEs, stochastic partial
differential equation, stochastic viscosity solutions.

{\bf MSC}: 60H15; 60H20

\section{Introduction}

Backward stochastic differential equations (BSDEs, for short) were
firstly been considered in it linear form by Bismut \cite{B1,B2}
in the context of optimal stochastic control. However,  nonlinear
BSDEs and their theory have been introduced by Pardoux
and Peng \cite{PP1}. It has been enjoying a great interest in the last ten year because of
its connection with  applied fields. We can cite stochastic control and stochastic games (see
\cite{HL}) and mathematical finance (see \cite{kal}). BSDEs also provide a
probabilistic interpretation for solutions to elliptic or parabolic
nonlinear partial differential equations generalizing the classical
Feynman-Kac formula \cite{PP2,PP3}. A new class of BSDEs, called backward doubly stochastic differential equations (BDSDEs, in short), was considered by  Pardoux and Peng \cite{PP4}. This new kind of BSDEs present two stochastic integrals driven by two independent Brownian motions $B$ and $W$ and is of the form
\begin{eqnarray}
Y_{s}&=&\xi+\int_{s}^{T}f(r,Y_{r}, Z_{r})\;dr+\int_{s}^{T}g(r,Y_{r},
Z_{r})\downarrow dB_{r}
\notag \\
&&-\int_{s}^{T}Z_{r}\;dW_{r},\ s\in[t,T],  \label{a1}
\end{eqnarray}
where $\xi$ is a square integrable variable. Let us remark that in the sens of Pardoux Peng, the integral driven by $\{B_{r}\}_{r\geq 0}$ is a backward Itô integral and
the other one driven by $\{W_{r}\}_{r\geq 0}$ is the standard forward Itô integral. Further, backward doubly SDEs seem to be suitable giving a probabilistic representation for a system of parabolic stochastic partial differential equations (SPDEs, in short). We refer to Pardoux and Peng \cite{PP4} for the link between SPDEs and
BDSDEs in the particular case where solutions of SPDEs are regular. The general situation is much delicate to treat because of difficulties of extending the notion of viscosity solutions to SPDEs. The stochastic viscosity solution for semi-linear SPDEs was introduced for the first time
in Lions and Souganidis \cite{LS}. They used the so-called "stochastic characteristic" to remove the stochastic integrals from an SPDE. Another way of defining a stochastic viscosity solution of SPDE is via an appeal to the Doss-Sussman transformation. Buckdahn and Ma \cite{BM1,BM2} were the first to use this approach in order to connect the stochastic viscosity solutions of SPDEs with BDSDEs.

In this paper we consider the approach of defining stochastic viscosity solution of SPDEs given by Buckdahn and Ma $\cite{BM1,BM2}$ which, in our mind is natural and coincide (if $g\equiv 0$) with the well-know viscosity solution of PDEs introduced by Crandall et $al$ $\cite{Cal}$. In this fact, we will work in the sequel of this paper with the version of backward doubly SDEs introduced in \cite{BM1,BM2}, which is in fact a time reversal of that considered by Pardoux and Peng \cite{PP4}.
Indeed, for $l, f$ be Lipschitz continuous functions in their spatial variables and $g\in C_{b}^{0,2,3}([0,T]\times%
\R^{d}\times\R;%
\R^{d})$, they consider a class of backward doubly SDEs is of this following form:
\begin{eqnarray}
Y_{s}^{t,x}&=&l(X^{t,x}_{0})+\int_{0}^{s}f(r,X^{t,x}_{r},Y_{r}^{t,x},
Z_{r}^{t,x})\;dr+\int_{0}^{s}g(r,X^{t,x}_{r},Y_{r}^{t,x})\;dB_{r}
\notag \\
&&-\int_{0}^{s} Z_{r}^{t,x}\downarrow dW_{r},\ s\in[0,t]. \label{a1bis}
\end{eqnarray}
The diffusion process $X^{t,x}$ is the unique solution of the forward SDE
\begin{eqnarray}
X^{t,x}_{s}=x+\int^{t}_{s}b(r,X^{t,x}_{r})\;dr+\int^{t}_{s}\sigma
(r,X^{t,x}_{r})\downarrow dW_{r}\ s\in[0,t],  \label{a4}
\end{eqnarray}
where $b$ and $\sigma$ are some measurable functions. Here the superscript $(t,x)$ indicates the dependence of the solution on the initial date $(t, x)$, and it will be omitted when the context is clear. Buckdahn and Ma proved in their two works \cite{BM1,BM2}, among other things, that $u(t,x)=Y^{t,x}_t$ is a stochastic viscosity solution of nonlinear parabolic SPDE:
\begin{eqnarray}
\begin{array}{l}
du(t,x)=[\mathcal{L}u(t,x)+f(t,x,u(t,x),(\nabla
u\sigma)(t,x))]\;dt  \\\\
\qquad\qquad+g(t,x,u(t,x))\;dB_{t},\,\, (t,x)\in (0,T)\times\R^{d},\\\\
u(0,x)=l(x),\qquad\qquad\,\,\,x\in\R^{d},
\end{array}
\label{a3}
\end{eqnarray}
where $\mathcal{L}$ defined by
\begin{eqnarray*}
\mathcal{L}=\frac{1}{2}\sum_{i,j=1}^{n}\sum_{l=1}^{k}\sigma_{il}\sigma_{lj}(x)\partial^{2}_{x_{i}x_{j}}
+\sum_{j=1}^{n}b_{j}(x)\partial_{x_{j}},
\end{eqnarray*}
is the infinitesimal operator generated by the diffusion process $X^{t,x}$.
More precisely, they show thank to the Blumenthal $0$-$1$ law that
\begin{eqnarray}
u(t,x)&=&\E\left
\{l(X_{0}^{t,x})+\int_{0}^{t}
f(r,X_{r}^{t,x},Y_{r}^{t,x},Z_{r}^{t,x})\;ds\right.\nonumber\\
&&\left.+\int_{0}^{t}
g(r,X_{r}^{t,x},Y_{r}^{t,x})\;dB_r\mid\mathcal{F}%
^{B}_{t}\right\}.\label{viscosity}
\end{eqnarray}
It is well know that $u$ is a $\mathcal{F}^{B}$-measurable field.
However, to the best of our knowledge, to date there has been no discussion in the literature concerning the path regularity of the process $Z$ when $f$ and $l$ are only Lipschitz continuous, even in the special cases where the coefficients are enough regular.

Our goal in this paper is twofold. First we show that if the coefficients $l$ and $f$ are continuously differentiable, then the viscosity solution
$u$ of the SPDE $(\ref{a3})$ will have a continuous spatial gradient $\partial_x u $ and, more important, the following probabilistic representation holds:
\begin{eqnarray}
\partial_{x}u(t,x)&=&\mbox{I\hspace{-.15em}E}\left\{l(X_{0}^{t,x})\;N^{t}_{0}+
\int_{0}^{s}
f(r,X_{r}^{t,x},Y_{r}^{t,x},Z_{r}^{t,x})N_{r}^{s}\;dr\right.\nonumber\\
&&\left.+\int_{0}^{t}
g(r,X_{r}^{t,x},Y_{r}^{t,x})N_{r}^{s}\;dB_r\mid\mathcal{F}^{B}
_{t}\right\}  \label{a5}
\end{eqnarray}
where $N_{.}^{s}$ is some process defined on $[0,s]$, depending
only on the solutions of the forward SDE $(\ref{a4})$ and its variational equation respectively. This representation can be thought of as a new type of nonlinear Feynman-Kac formula for the derivative of $u$, which does not seem to exist in the literature. The main significance of the formula, however, lies in that it does not depend on the derivatives of the coefficients of the backward doubly BSDE $(\ref{a1bis})$.  Because of this special feature, we can then derive a representation
\begin{eqnarray}
Z^{t,x}_s=\partial_x u(t,X^{t,x}_s)\sigma(t,X^{t,x}_s), \, s\in[0,t],\label{regularity}
\end{eqnarray}
under only a Lipschitz condition on $l$ and $f$. This latter representation then enables us to prove the path regularity of the process $Z$, the second goal of this paper, even in the case where the terminal value of $Y^{t,x}$ is of the form $l(X_{t_0},..., X_{t_n})$, where $\pi:0 = t_0 < .. < t_n= t$ is any partition of $[0, t]$, a result that does not seem to be amendable by any existing method.

Let us recall that this two  representations have already be given by Ma and Zhang \cite{MZ} in the case of a probabilistic representation for solutions of PDEs via BSDEs. Consequently our approach is be inspired by their works. However, there are particularities: first, the derivative notion is take in the flow sense (independent of $\omega_{1}$) because $u$, the stochastic viscosity solution of the SPDE $(\ref{a3})$, is a random field. Secondly, the proof to the continuity of the representation of the process $Z$ need, since ${\bf F}_{s}^{t}=(\mathcal{F}_{s}^{B}\otimes\mathcal{F}^{W}_{s,t})_{0\leq s\leq t}$ is not a filtration, ${\bf G}_{s}^{t}=(\mathcal{F}_{s}^{B}\otimes\mathcal{F}^{W}_{0,t})_{0\leq s\leq t}$ which is a filtration.

The rest of this paper is organized as follows. In section 2 we give all the necessary preliminaries. In section 3 we establish the new Feynman-Kac formula between coupled forward backward doubly SDE $(\ref{a1})$-$(\ref{a4})$ and the SPDE $(\ref{a3})$, under the $C^{1}$-assumption of the coefficients. The section $4$ is devoted to give the main representation theorem assuming only the Lipschitz condition of the coefficients $l$ and $f$. In section $5$ we study the path regularity of the process $Z$.

\section{Preliminaries}
\setcounter{theorem}{0} \setcounter{equation}{0}
Let $T>0$ a fixed time horizon. Throughout this paper $\{W_{t}, 0\leq t\leq T\}$ and $\{B_{t}, 0\leq t\leq T\}$ will denote two independent $d$-dimensional Brownian motions defined on the complete probability spaces $(\Omega_{1},\mathcal{F}_{1},\mbox{I\hspace{-.15em}P}_{1})$ and $(\Omega_{2},\mathcal{F}_{2},\P_{2})$ respectively. For any process $\{U_s,\, 0\leq s\leq T\}$ defined on $(\Omega_i,\mathcal{F}_i,\P_i)\ (i=1,2)$, we write $\mathcal{F}^{U}_{s,t}=\sigma(U_r-U_s,\, s\leq r\leq t)$ and $\mathcal{F}^{U}_{t}=\mathcal{F}^{U}_{0,t}$. Unless otherwise specified we consider
\begin{eqnarray*}
\Omega=\Omega_{1}\times\Omega_{2},\,\,\mathcal{F}=\mathcal{F}_{1}\otimes\mathcal{F}_{2}\,\,
\mbox{ and}\,\, \P=\P_{1}\otimes\P_{2}.
\end{eqnarray*}
In addition, we put for each $t\in[0,T]$,
$${\bf F}=\{\mathcal{F}_{s}=\mathcal{F}_{s}^{B}\otimes\mathcal{F}^{W}_{s,T}\vee\mathcal{N},\; 0\leq s\leq T\}$$
where $\mathcal{N}$ is the collection of $\P$-null sets. In other words, the collection ${\bf F}$ is
$\P$-complete but is neither increasing nor decreasing so that, it is not a filtration. Let us tell also that random variables $\xi(\omega_{1}),\,\omega_{1}\in \Omega_{1}$ and $\zeta(\omega_{2}),\, \omega_{2}\in
\Omega_{2}$ are considered as random variables on $\Omega $ via
the following identification:
\begin{eqnarray*}
\xi(\omega_{1},\omega_{2})=\xi(\omega_{1});\,\,\,\,\,\zeta(\omega_{1},\omega_{2})=\zeta(\omega_{2}).
\end{eqnarray*}
Let $E$ denote a generic Euclidean space; and regardless
of its dimension we denote $\langle ; \rangle$	 to be the inner product and $|.|$ the norm in $E$. if
an other Euclidean spaces are needed, we shall label them as $E_1; E_2,.,.,$ etc. Furthermore, we use the notation $\partial_t = \frac{\partial}{\partial t}, \ \partial_x = (\frac{\partial}{\partial x_1},\frac{\partial}{\partial x_2},..,\frac{\partial}{\partial x_d})$ and $\partial^{2}=\partial_{xx}=(\partial^{2}_{x_{i}x_j})^{d}_{i,j=1}$, for $(t,x) \in[0, T]\times\R^d$. Note that if $\psi = (\psi^1,..,\psi^d):\R^d \rightarrow\R^d$, then $\partial_{x}\psi\triangleq(\partial x_j\psi^i)^{d}_{i,j=1}$ is a matrix. The meaning of $\partial_{xy}, \partial_{yy}$, etc. should be clear from the context.

The following spaces will be used frequently in the sequel (let $\mathcal{X}$ denote a generic Banach space):
\begin{enumerate}
\item For $t\in [0,T], L^{0}([0,t];\mathcal{X})$ is the space of all measurable
functions $\varphi:[0,t]\mapsto \mathcal{X}$.
\item For $0\leq t\leq T, C([0,t];\mathcal{X})$ is the space of all continuous
functions $\varphi:[0,t]\mapsto \mathcal{X}$; further, for any $p>0$ we denote $%
\displaystyle{|\varphi|^{*,p}_{0,t}=\sup_{0\leq s\leq t}
\|\varphi(s)\|^{p}_{\mathcal{X}}}$ when the context is clear.
\item For any $k,\,n\geq 0,\; C^{k,n}([0,T]\times E;E_{1})$
is the space of all $E_{1}$-valued functions $\varphi(t,e),\; (t,e) \in [0,T]\times E$, such that they are $k$-times
continuously differentiable in $t$ and $n$-times continuously differentiable in $e$.
\item $C_{b}^{1}([0,T]\times E;E_{1})$ is the space of those $%
\varphi \in C^{1}([0,T]\times E;E_{1})$ such that all the partial
derivatives are uniformly bounded.
\item $W^{1,\infty}(E,E_{1})$ is the space of all
measurable functions $\psi: E\mapsto E_{1}$, such that for
some constant $K>0$ it holds that $|\psi(x)-\psi(y)|_{E_{1}}%
\leq K|x-y|_{E}, \forall x, y \in E$.
\item For any sub-$\sigma$-field $\mathcal{G} \subseteq \mathcal{F}_{T}^{B}$
and $0\leq p< \infty,\, L^{p}(\mathcal{G};E)$ denote all $E$-valued $\mathcal{G}$-measurable random variable $\xi$ such that $%
\mbox{I\hspace{-.15em}E}|\xi|^{p}<\infty$. Moreover, $\xi\in L^{\infty}(%
\mathcal{G};E)$ means it is $\mathcal{G}$-measurable and bounded.
\item For $0\leq p <\infty, L^{p}(\mathbf{F},[0,T];\mathcal{X})$ is the space
of all $\mathcal{X}$-valued, $\mathbf{F}$-adapted processes $\xi$ satisfying $%
\displaystyle{\ \mbox{I\hspace{-.15em}E}\left(\int_{0}^{T}\|\xi_{t}\|^{p}_{%
\mathcal{X}}dt\right)<\infty}$; and also, $\xi \in L^{\infty}(\mathbf{F},[0,T];%
\mbox{I\hspace{-.15em}R}^{d})$ means that the process $\xi$ is uniformly essentially bounded in $%
(t,\omega)$.
\item $C(\mathbf{F},[0,T]\times E; E_{1})$ is the
space of $E_{1}$-valued, continuous random field $\varphi :\Omega\times[%
0,T]\times E$, such that for fixed $e\in E$, $\varphi(.,.,e)
$ is an $\mathbf{F}$-adapted process.
\end{enumerate}
To simplify notation we often write $C([0, T] \times E;E_1) = C^{0,0}([0, T] \times E;E_1)$; and if $E_1 = \R$, then
we often suppress $E_1$ for simplicity (e.g., $C^{k,n}([0, T]\times E;\R)
= C^{k,n}([0, T]\times E),\; C^{k,n}({\bf F},\ [0, T]\times E;\R) = C^{k,n}({\bf F},\ [0, T]\times E), ...,$ etc.). Finally, unless otherwise specified (such as process $Z$ mentioned in Section 1), all vectors in the paper will be regarded as column vectors.

Throughout this paper we shall make use of the following standing assumptions:
\begin{description}
\item  $(\mathbf{A1})$ The functions $\sigma \in
C_{b}^{0,1}([0,T]\times\R^{d};\R
^{d\times d}),\, b\in C_{b}^{0,1}([0,T]\times\R^{d};%
\R^{d})$; and all the partial derivatives of $b$ and $%
\sigma$ (with respect to $x$) are uniformly bounded by a common
constant $K>0 $. Further, there exists constant $c>0$, such that
\begin{eqnarray}
\xi^{T}\sigma(t,x)\sigma(t,x)^{T}\xi\geq c|\xi|^{2}, \forall x, \xi \in %
\R^{d}, t\in [0,T].  \label{b1}
\end{eqnarray}

\item  $(\mathbf{A2})$ The function $f\in C(\mathcal{F}^{B},[0,T]\times \R^{d}\times\R\times\R^{d})\cap W^{1,\infty}([0,T]\times\R
^{d}\times\R\times\R^{d})$ and $l\in W^{1,\infty}(\R^{d})$.
Furthermore, we denote the Lipschitz constants of $f$ and $l$ by a common one $K>0$ as
in $(\mathbf{A1})$; and we assume that
\begin{eqnarray}
\sup_{0\leq t \leq
T}\left\{|b(t,0)|+|\sigma(t,0)|+|f(t,0,0,0)|+|g(0)|\right\}\leq K.
\label{b2}
\end{eqnarray}

\item  $(\mathbf{A3})$ The function $g \in C_{b}^{0,2,3}([0,T]\times%
\R^{d}\times\R;%
\R^{d})$
\end{description}

The following results are either standard or
slight variations of the well-know results in SDE and backward doubly SDE literature; we give only the statement for ready reference.
\begin{lemma}
Suppose that $b\in C(\mathbf{F},[0,T]\times \R^{d};\R^{d})\cap L^{0}(\mathbf{F}%
,[0,T]; W^{1,\infty}(R^{d};\R^{d})),\newline
\sigma\in C(\mathbf{F},[0,T]\times \R^{d};%
\R^{d\times d})\cap L^{0}(\mathbf{F},[0,T];
W^{1,\infty}(R^{d};\R^{d\times d}))$, with a common
Lipschitz constant $K >0$. Suppose also that $b(t,0)\in L^{2}(\mathbf{F},[0,T];
\R^{d})$ and $\sigma(t,0)\in L^{2}(\mathbf{F},[0,T];
\R^{d\times d})$. Let $X$ be the unique solution of
the following forward SDE
\begin{eqnarray}
X_{s}=x+\int^{t}_{s}b(r,X_{r})\;dr+\int^{t}_{s}
\sigma(r,X_{r})\;dW_{r}.  \label{b4}
\end{eqnarray}
Then for any $p\geq 2$, there exists a constant $C>0$ depending only
on $p, T $ and $K$, such that
\begin{eqnarray}
E(|X|_{0,t}^{*,p})\leq C\left\{|x|^{p}+%
\E\int_{0}^{T}[|b(s,0)|^{p}+|\sigma(s,0)|^{p}]\;ds%
\right\}  \label{b5}
\end{eqnarray}
\end{lemma}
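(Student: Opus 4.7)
The plan is to mimic the classical moment estimate for It\^o SDEs, with the understanding that here the equation is parametrized ``backward'' in time (running from $s=t$ down to $s=0$, with the stochastic integral being the backward It\^o integral consistent with the filtration $\mathbf{F}$ used throughout the paper, i.e.\ $\mathcal{F}^W_{s,T}$ is backward in $s$). After a standard time reversal $\widetilde X_u = X_{t-u}$, $\widetilde W_u = W_t - W_{t-u}$, the equation becomes an ordinary forward It\^o SDE on $[0,t]$ with Lipschitz coefficients satisfying the linear growth bound $|b(r,x)|\le K|x|+|b(r,0)|$ and similarly for $\sigma$, so the estimate is of standard form and supremum norms are preserved.

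First I would apply It\^o's formula to $\varphi(x) = (1+|x|^2)^{p/2}$ (chosen rather than $|x|^p$ to avoid the non-smoothness at zero when $1<p<2$, and to handle $p\ge 2$ uniformly). This produces, for each $s\in[0,t]$, an identity of the shape
\begin{eqnarray*}
\varphi(X_s) &=& \varphi(x) - \int_s^t \langle \varphi'(X_r),\, b(r,X_r)\rangle\,dr
- \tfrac{1}{2}\int_s^t \mathrm{tr}\bigl[\varphi''(X_r)\sigma\sigma^{T}(r,X_r)\bigr]\,dr \\
&& -\int_s^t \langle \varphi'(X_r),\, \sigma(r,X_r)\,dW_r\rangle .
\end{eqnarray*}
Using $|\varphi'(x)|\le C(1+|x|)^{p-1}$, $|\varphi''(x)|\le C(1+|x|)^{p-2}$, Young's inequality, and the linear growth of $b,\sigma$, the two Lebesgue integrals are bounded pointwise by $C\int_s^t\bigl[\varphi(X_r)+|b(r,0)|^p+|\sigma(r,0)|^p\bigr]\,dr$.

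Next I would take the supremum over $s\in[0,t]$ and then expectation. The Burkholder--Davis--Gundy inequality applied to the martingale term yields
\begin{eqnarray*}
\E\sup_{s\le u\le t}\Bigl|\int_u^t \langle \varphi'(X_r),\sigma(r,X_r)\rangle\,dW_r\Bigr|
&\le& C\,\E\Bigl(\int_0^t |\varphi'(X_r)|^2|\sigma(r,X_r)|^2\,dr\Bigr)^{1/2} \\
&\le& \tfrac{1}{2}\E\sup_{0\le r\le t}\varphi(X_r) + C\,\E\!\int_0^t\!\bigl[\varphi(X_r)+|\sigma(r,0)|^p\bigr]\,dr,
\end{eqnarray*}
where the last step uses the elementary bound $ab\le \tfrac12 a^2+\tfrac12 b^2$ together with $|\varphi'(x)|^2|\sigma|^2\le C(1+|x|^2)^{p-1}(|x|^2+|\sigma(r,0)|^2)\le C(\varphi(x)+|\sigma(r,0)|^p)$. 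Absorbing the $\tfrac12\E\sup\varphi(X_\cdot)$ term onto the left-hand side gives
\begin{eqnarray*}
\E\!\sup_{0\le s\le t}\varphi(X_s) \le C\Bigl\{\varphi(x)+\E\!\int_0^T\!\bigl[|b(r,0)|^p+|\sigma(r,0)|^p\bigr]\,dr + \int_0^t \E\!\sup_{r\le u\le t}\varphi(X_u)\,dr\Bigr\}.
\end{eqnarray*}
Gronwall's lemma in the variable $s$ (after first truncating with stopping times $\tau_n=\inf\{s:|X_s|\ge n\}\wedge 0$ to ensure finiteness, then passing to the limit by monotone convergence) then yields (\ref{b5}) with the stated dependence of $C$ on $p,T,K$.

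The only genuine subtlety is verifying a priori that the supremum moments are finite so that Gronwall can be applied, which is the standard localization argument described above; once finiteness is in hand the estimate propagates globally. The backward-in-time orientation of the SDE is cosmetic and handled by the time reversal.
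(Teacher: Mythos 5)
The paper does not actually prove this lemma: it appears in the block of results described as ``either standard or slight variations of the well-known results in SDE and backward doubly SDE literature,'' stated for ready reference only. Your argument is the canonical proof of such a moment estimate (time reversal to an ordinary forward It\^o SDE, It\^o's formula for $\varphi(x)=(1+|x|^2)^{p/2}$, linear growth of the coefficients, Burkholder--Davis--Gundy, localization, Gronwall), and it is essentially correct. One intermediate bound is misstated: the inequality $|\varphi'(x)|^2|\sigma(r,x)|^2\le C(\varphi(x)+|\sigma(r,0)|^p)$ cannot hold as written, since the left side grows like $|x|^{2p}$ while the right side grows like $|x|^p$. The standard repair --- clearly what you intend, given your appeal to $ab\le\frac12 a^2+\frac12 b^2$ --- is to bound the BDG term by $\E\bigl[(\sup_r\varphi(X_r))^{1/2}\bigl(\int_0^t|\varphi'(X_r)|^2|\sigma(r,X_r)|^2\varphi(X_r)^{-1}\,dr\bigr)^{1/2}\bigr]$ and then use $|\varphi'(x)|^2|\sigma(r,x)|^2\varphi(x)^{-1}\le C(\varphi(x)+|\sigma(r,0)|^p)$, which is true by Young's inequality. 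Likewise the localizing time should be $\tau_n=\inf\{u:|\widetilde X_u|\ge n\}\wedge t$ in the reversed clock, not ``$\wedge\,0$.'' These are cosmetic slips; with them corrected the proof is complete and yields the stated dependence of $C$ on $p$, $T$ and $K$.
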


\begin{lemma}
Assume $f\in C(\mathbf{F},[0,T]\times \R \times \R^{d})\cap L^{0}(\mathbf{F}%
,[0,T]; W^{1,\infty}(\R \times R^{d}))$, with a
uniform Lipschitz constant $K>0$, such that $f(s,0,0)\in L^{2}(\mathbf{F},[0,T])$\,\, %
and $g\in C(\mathbf{F},[0,T]\times \R \times \R^{d};\R^{d})\cap L^{0}(\mathbf{F},[0,T]; W^{1,\infty}(
\R \times R^{d};\R^{l}))$ with a common uniform
Lipschitz constant $K>0$ with respect the first variable and the Lipschitz constant
$0<\alpha<1$  which respect the second variable and such
that $g(s,0,0)\in L^{2}(\mathbf{F},[0,T]) $. For any $\xi \in L^{2}({%
\mathcal{F}}_{0};\R)$, let $(Y,Z)$ be the adapted solution to the BDSDE:
\begin{eqnarray}
Y_{s}=\xi+\int_{0}^{s}f(r,Y_{r},Z_{r})\;dr+\int_{0}^{s}
g(r,Y_{r},Z_{r})\;dB_{r}-\int_{0}^{s}Z_{r}\downarrow dW_{r}.
\label{b6}
\end{eqnarray}
Then there exists a constant $C>0$ depending only on $T$ and on the Lipschitz constants $K$ and $\alpha$, such that
\begin{eqnarray}
\E\int_{0}^{T}|Z_{s}|^{2}ds \leq C\E\left\{|\xi|^{2}+\int_{0}^{T}[|f(s,0,0)|^{2}
+|g(s,0,0)|^{2}]\;ds\right\}.  \label{b7}
\end{eqnarray}
Moreover, for all $p\geq 2$, there exists a constant $C_{p}>0$, such that
\begin{eqnarray}
\E(|Y|_{0,t}^{*,p})\leq C_{p}\E
\left\{|\xi|^{p}+\int_{0}^{T}[|f(s,0,0)|^{p}+|g(s,0,0)|^{p}]\;ds\right\}
\label{b8}
\end{eqnarray}
\end{lemma}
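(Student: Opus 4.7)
The statement is the standard a priori energy estimate for a BDSDE of the type introduced in \cite{PP4}, so the plan is to apply the doubly stochastic Itô formula to $|Y_s|^2$ (then to $|Y_s|^p$) and to close the resulting inequalities by Gronwall, using the contraction hypothesis $\alpha<1$ in an essential way.

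\textbf{Step 1: the $L^2$ bound on $Z$.} I would apply the extended Itô formula (which simultaneously handles the forward integral $\int g\,dB$ and the backward integral $\int Z\downarrow dW$) to $|Y_s|^2$ between $s=0$ and $s=T$. Using (\ref{b6}) and the sign conventions of Pardoux--Peng, taking expectation eliminates the two (local) martingale terms and produces
\begin{eqnarray*}
\E|Y_T|^2+\E\int_0^T|Z_r|^2\,dr=\E|\xi|^2+2\E\int_0^T Y_r f(r,Y_r,Z_r)\,dr+\E\int_0^T|g(r,Y_r,Z_r)|^2\,dr.
\end{eqnarray*}
I would then split the right--hand side via the Lipschitz bounds
\begin{eqnarray*}
|f(r,y,z)|\leq|f(r,0,0)|+K(|y|+|z|),\qquad |g(r,y,z)|^2\leq (1+\varepsilon)\alpha^2|z|^2+C_\varepsilon(|g(r,0,0)|^2+|y|^2),
\end{eqnarray*}
and apply $2|Y_r||f|\leq \eta|Y_r|^2+\eta^{-1}|f|^2$ with $\eta>0$. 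Choosing $\varepsilon$ small enough that $(1+\varepsilon)\alpha^2+\eta^{-1}K^2<1$ lets me absorb the $\int|Z|^2$ term into the left--hand side with a strictly positive coefficient. A standard Gronwall argument on $\E|Y_s|^2$ (run in $s$ after redoing the computation on $[0,s]$) then yields simultaneously $\sup_{s\leq T}\E|Y_s|^2<\infty$ and (\ref{b7}). This is the step that crucially requires the assumption $\alpha<1$.

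\textbf{Step 2: the $L^p$ bound on $Y$.} For $p\geq 2$, I would apply Itô's formula to $|Y_s|^p=(|Y_s|^2)^{p/2}$ on the interval $[0,s]$, take the supremum in $s\in[0,T]$, and then take expectation. The two stochastic integrals are controlled by the Burkholder--Davis--Gundy inequality, producing contributions of the form
\begin{eqnarray*}
C\,\E\left(\int_0^T|Y_r|^{2p-2}|Z_r|^2\,dr\right)^{1/2}+C\,\E\left(\int_0^T|Y_r|^{2p-2}|g|^2\,dr\right)^{1/2},
\end{eqnarray*}
each of which is bounded by $\tfrac12\E|Y|^{*,p}_{0,T}$ plus lower--order terms via Young's inequality, leaving $\E\int_0^T|Y_r|^{p-2}|Z_r|^2\,dr$ and $\E\int_0^T|Y_r|^{p-2}|g|^2\,dr$ which are handled by the same Young trick and the contraction $\alpha<1$ (to absorb the $|Y|^{p-2}|Z|^2$ term). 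After rearranging, Gronwall's inequality applied to $\varphi(s)=\E\sup_{r\leq s}|Y_r|^p$ delivers (\ref{b8}).

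\textbf{Expected main obstacle.} The computation itself is routine BDSDE bookkeeping; the only genuinely delicate point is making the absorption of $|Z|^2$ (resp.\ $|Y|^{p-2}|Z|^2$) work. Since the $B$--integral in (\ref{b6}) contributes $+\int|g|^2\,dr$ to the Itô expansion of $|Y|^2$ with the \emph{same} sign as the coefficient of $-\int|Z|^2\,dr$ from the backward $W$--integral, the hypothesis $\alpha<1$ is what allows the $|Z|^2$--terms on both sides of the energy identity to be combined into a strictly positive multiple of $\int|Z|^2\,dr$. Tracking the resulting constants through Gronwall gives the dependence on $K$ and $\alpha$ stated in the lemma. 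The argument follows the original proof in \cite{PP4} almost verbatim, adapted to the time--reversed convention of (\ref{b6}). \rule{0.5em}{0.5em}
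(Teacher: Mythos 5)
The paper does not actually prove this lemma: it is stated in Section 2 under the blanket remark that such results ``are either standard or slight variations of the well-known results in the SDE and backward doubly SDE literature,'' i.e.\ the author implicitly defers to Pardoux--Peng \cite{PP4}. Your proposal supplies precisely that standard argument, and it is correct: the It\^o expansion of $|Y_s|^2$ with the $-\int_0^s|Z_r|^2\,dr$ correction from the backward $W$-integral and the $+\int_0^s|g|^2\,dr$ correction from the forward $B$-integral is the right identity for the time-reversed convention of (\ref{b6}), and the absorption of the $|Z|^2$-terms using $(1+\varepsilon)\alpha^2+\eta^{-1}K^2<1$ is exactly where the hypothesis $\alpha<1$ enters, as you say. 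The only place where the bookkeeping in Step 2 deserves a little more care than your sketch suggests is the control of $\E\int_0^T|Y_r|^{p-2}|Z_r|^2\,dr$ arising from Burkholder--Davis--Gundy: this term should first be bounded (without the supremum) from the expectation form of the It\^o identity for $|Y|^p$, where the negative correction $-\tfrac{p(p-1)}{2}\int|Y|^{p-2}|Z|^2\,dr$ absorbs $\tfrac{p(p-1)}{2}\int|Y|^{p-2}|g|^2\,dr$ with the \emph{same} prefactor (so no extra smallness in $p$ is needed), and only then fed back into the BDG estimate; but this is the standard two-pass argument and does not affect the validity of your plan.
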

We now review some basic facts of the anticipating stochastic calculus, especially those related to the backward doubly SDEs (see Pardoux-Peng $\cite{PP4}$).
For any random variables $\xi$ of the form
\begin{eqnarray*}
\xi=F\left(\int_{0}^{T}\varphi_{1}dW_{t},..,\int_{0}^{T}\varphi_{n}dW_{s}
;\int_{0}^{T}\psi_{1}dB_{s},...,\int_{0}^{T}\psi_{p}dB_{s}\right)
\end{eqnarray*}
with $F\in C_{b}^{\infty}(\mbox{I\hspace{-.15em}R}^{n+p}),\,
\varphi_{1},...,\varphi_{n}\in L^{2}([0,T],\mbox{I\hspace{-.15em}R}^{d}),\,\psi_{1},...,\psi_{n}\in L^{2}([0,T],\mbox{I\hspace{-.15em}R}%
^{d}),$ we let
\begin{eqnarray*}
D_{t}\xi=\sum_{i=}^{n}\frac{\partial F}{\partial x_{i}}\left(\int_{0}^{T}%
\varphi_{1}dW_{t},..,\int_{0}^{T}\varphi_{n}dW_{s}
;\int_{0}^{T}\psi_{1}dB_{s},...,\int_{0}^{T}\psi_{p}dB_{s}\right)\varphi_{i}(t).
\end{eqnarray*}
For such a $\xi$, we define its $1,2$-norm as:
\begin{eqnarray*}
\|\xi\|_{1,2}^{2}=\mbox{I\hspace{-.15em}E}\left[|\xi|^{2}+\mbox{I\hspace{-.15em}E}%
\int_{0}^{T}|D_{r}\xi|^{2}dr\right].
\end{eqnarray*}
$\mathcal{S}$ denoting the set of random variable of the above form, we define the Sobolev space
$$\mbox{I\hspace{-.15em}D}^{1,2}\triangleq\overline{\mathcal{S}}^{\|.\|_{1,2}}.$$
The "derivation operator" $D_.$ extends as an operator from $\D^{1,2}$ into $L^{2}(\Omega,L^{2}([0,T],\R^{d}))$.

We shall apply the previous anticipative calculus to the  coupled forward backward doubly SDEs $(\ref{a4})$-$(\ref{a1bis})$.
In this fact, let us consider the following variational equation that will play a important role in this paper: for $i=1,..,d$,
\begin{eqnarray}
\nabla_{i} X_{s}^{t,x}&=&e_{i}+\int^{t}_{s}\partial_{x}b(r,X_{r}^{t,x})\nabla_{i}
X_{r}^{t,x}dr+\sum_{j=1}^{d}\int^{t}_{s}\partial_{x}\sigma^{j}
(r,X_{r}^{t,x})\nabla_{i}X_{r}^{t,x}\downarrow dW_{r}^{j}, \nonumber\\
\nabla_{i} Y_{s}^{t,x}&=&\partial_{x}l(X_{0}^{t,x})\nabla_{i}
X_{0}^{t,x}\nonumber\\
&&+\int^{s}_{0}[\partial_{x}f(r,\Xi^{t,x}(r))\nabla_{i}
X_{r}^{t,x}+\partial_{y}f(r,\Xi^{t,x}(r))\nabla_{i} Y_{r}^{t,x}+\langle\partial_{z}f(r,\Xi^{t,x}(r)),\nabla_{i}
Z_{r}^{t,x}\rangle]dr\nonumber\\
&&+\int^{s}_{0}[\partial_{x}g(r,\Theta^{t,x}(r))\nabla_{i}
X_{r}^{t,x}+\partial_{y}g(r,\Theta^{t,x}(r))\nabla_{i} Y_{r}^{t,x}]dB_{r}-\int^{s}_{0}\nabla_{i}Z_{r}^{t,x}\downarrow dW_{r},  \label{b10}
\end{eqnarray}
where $e_{i}=(0,...,\overset{i}{1},...,0)^{T}\in\R^{d},\Xi^{t,x}=(\Theta^{t,x},Z^{t,x}),\,\Theta^{t,x}=(X^{t,x},Y^{t,x})$ and $\sigma^{j}(.)$
is the $j$-th column of the matrix $\sigma(.)$.
We recall again that the superscription $^{t,x}$ indicates the dependence of the solution on the initial
date $(t,x)$, and will be omitted when the context is clear. We also remark that under the above assumptions, $$\left(\nabla X^{t,x},\nabla Y^{t,x},\nabla Z^{t,x} \right)\in L^{2}( \mathbf{F};C([0,T];
\R^{d\times d})\times C([0,T];\R^{d})\times L^{2}([0,T];\R^{d\times d})).$$ Further the $d\times d$-matrix-valued process $\nabla X^{t,x}$
satisfies a linear SDE and $\nabla X_{t}^{t,x}=I$, so that $[\nabla X_{s}^{t,x}]^{-1}$ exists for $%
s\in [0,t], \P$-a.s. and we have the following:
\begin{lemma}
Assume that $\left(\mathbf{A1}\right)$ holds; and suppose that $f\in
C^{0,1}_{b}([0,T]\times\mbox{I\hspace{-.15em}R}^{2d+1})$ and\newline
$g\in C^{0,2,3}_{b}([0,T]\times\mbox{I\hspace{-.15em}R}^{d+1};%
\mbox{I\hspace{-.15em}R}^{d})$. Then $(X,Y,Z)\in L^{2}([0,T];%
\mbox{I\hspace{-.15em}D}^{1,2}(\mbox{I\hspace{-.15em}R}^{2d+1}))$,
and there exists a version of $(D_{s}X_{r},D_{s}Y_{r},D_{s}Z_{r})$
that satisfies
\begin{eqnarray}
\left\{
\begin{array}{l}
D_{s}X_{r}=\nabla X_{r}(\nabla
X_{s})^{-1}\sigma(s,X_{s})\mathbf{1}_{\{s\leq
r\}}, \\
D_{s}Y_{r}=\nabla Y_{r}(\nabla
X_{s})^{-1}\sigma(s,X_{s})\mathbf{1}_{\{s\leq
r\}}, \\
D_{s}Z_{r}=\nabla Z_{r}(\nabla
X_{s})^{-1}\sigma(s,X_{s})\mathbf{1}_{\{s\leq r\}},
\end{array}
\right.\,\,\ 0\leq s,r \leq t.  \label{b11}
\end{eqnarray}
\end{lemma}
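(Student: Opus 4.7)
The plan is to establish Malliavin differentiability of $(X,Y,Z)$ first, and then identify the derivatives with the flow-based ansatz by matching linear equations. For the differentiability step, approximate (\ref{a4})-(\ref{a1bis}) by Picard iterates: under the smoothness assumptions of this lemma, each iterate lies in $\D^{1,2}$, and the Malliavin derivatives of the iterates satisfy linear SDE/BDSDE systems whose coefficients inherit the uniform Lipschitz bounds of the original system. Standard a priori estimates of the type in Lemmas 2.1 and 2.2, applied to these derivative equations, yield uniform bounds in the norm $\|\cdot\|_{1,2}$, and a passage to the limit puts $(X,Y,Z)$ in $L^{2}([0,T];\D^{1,2})$ and identifies $(D_{s}X,D_{s}Y,D_{s}Z)$ as the solution of the limiting linear system.

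Next, applying $D_{s}$ formally to (\ref{a4}) gives, for $r\leq s$,
\begin{eqnarray*}
D_{s}X_{r}&=&\sigma(s,X_s)+\int_{r}^{s}\partial_{x}b(u,X_u)\,D_{s}X_{u}\,du+\sum_{j=1}^{d}\int_{r}^{s}\partial_{x}\sigma^{j}(u,X_u)\,D_{s}X_{u}\downarrow dW_{u}^{j},
\end{eqnarray*}
a linear backward SDE in $r\mapsto D_{s}X_{r}$ with terminal value $\sigma(s,X_s)$ at $r=s$. Using (\ref{b10}), one checks that the process $r\mapsto \nabla X_r(\nabla X_s)^{-1}\sigma(s,X_s)$ satisfies the same equation (factor out the constant-in-$r$ matrix $(\nabla X_s)^{-1}\sigma(s,X_s)$), and uniqueness for this linear SDE yields the first identity of (\ref{b11}). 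An entirely analogous computation applied to (\ref{a1bis}) produces a linear BDSDE for $(D_{s}Y,D_{s}Z)$ with initial datum $D_sY_0=\partial_{x}l(X_0)\,D_{s}X_0$ and coefficients equal to the partial derivatives of $f$ and $g$ along $\Xi^{t,x}$ and $\Theta^{t,x}$. Substituting the candidate triple $(\nabla X_r A_s,\nabla Y_r A_s,\nabla Z_r A_s)$ with $A_s=(\nabla X_s)^{-1}\sigma(s,X_s)$, and using that $(\nabla X,\nabla Y,\nabla Z)$ solves (\ref{b10}), reduces the check to an algebraic identity; uniqueness of the linear BDSDE (a consequence of Lemma 2.2 applied to the difference of two candidate solutions) then delivers the remaining two identities of (\ref{b11}).

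The main obstacle I expect is the rigorous manipulation of Malliavin calculus across the backward It\^o integrals $\int\cdot\downarrow dW$ and the non-filtration structure of $\mathbf{F}$. The standard commutation relation $D_{s}\int_{0}^{r}Z_{u}\,dW_{u}=Z_{s}\mathbf{1}_{\{s\leq r\}}+\int_{0}^{r}D_{s}Z_{u}\,dW_{u}$ must be replaced by its backward analogue, with careful bookkeeping of the trace term and the indicator. A clean device that bypasses redoing the entire Malliavin machinery for backward It\^o integrals is to time-reverse via $\hat{X}_{u}=X_{t-u}$ and $\hat{W}_{v}=W_{t}-W_{t-v}$, which turns (\ref{a4}) into a classical forward SDE driven by the Brownian motion $\hat{W}$; standard Nualart-type Malliavin calculus then applies directly to the reversed system, and translating back via the identification between Malliavin derivatives with respect to $W$ and $\hat{W}$ produces the representations (\ref{b11}).
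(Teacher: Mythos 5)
The paper offers no proof of this lemma at all: it is grouped with Lemmas 2.1, 2.2 and 2.4 among the preliminary results declared to be ``either standard or slight variations of the well-known results in SDE and backward doubly SDE literature,'' i.e.\ it is quoted as the BDSDE analogue of the Malliavin-differentiability results of Pardoux--Peng \cite{PP4} and of Lemma 2.4 of Ma--Zhang \cite{MZ}. There is therefore nothing in the paper to compare your argument against line by line; what can be said is that your outline is the standard route to such a statement and is essentially sound. Picard iteration plus the closability of $D$ (convergence in $L^{2}$ together with a uniform bound on the $\|\cdot\|_{1,2}$-norms of the iterates gives membership in $\D^{1,2}$ and identifies the limit of the derivatives) yields $(X,Y,Z)\in L^{2}([0,T];\D^{1,2})$; the identification of $(D_{s}X,D_{s}Y,D_{s}Z)$ with $(\nabla X_{\cdot}A_{s},\nabla Y_{\cdot}A_{s},\nabla Z_{\cdot}A_{s})$, $A_{s}=(\nabla X_{s})^{-1}\sigma(s,X_{s})$, by uniqueness of the linearized system is exactly the right mechanism; and the time-reversal device for transporting the classical commutation relation to the backward It\^{o} integrals is legitimate (it is, in effect, how the Buckdahn--Ma formulation relates to Pardoux--Peng's).

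One concrete discrepancy should be flagged. In the time-reversed convention of (\ref{a4})--(\ref{a1bis}), $X_{r}$ is $\mathcal{F}^{W}_{r,t}$-measurable (a functional of the increments $W_{u}-W_{r}$, $u\in[r,t]$), so $D_{s}X_{r}$ vanishes unless $s\geq r$; this is precisely what your computation produces, since you derive the equation for $r\leq s$ with the trace term $\sigma(s,X_{s})$ appearing at $r=s$. (Sanity check with $b=0$ and constant $\sigma$: $X_{r}=x+\sigma(W_{t}-W_{r})$ gives $D_{s}X_{r}=\sigma\mathbf{1}_{[r,t]}(s)$.) The lemma as printed carries the indicator $\mathbf{1}_{\{s\leq r\}}$, which is the one appropriate to the forward-time setting of \cite{MZ} and is the opposite of what your derivation correctly yields. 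So either you state explicitly that you are proving the corrected version --- adding the observation that $D_{s}X_{r}=D_{s}Y_{r}=D_{s}Z_{r}=0$ for $s<r$ by measurability --- or you have not proved the statement as written; as it stands your proposal silently proves a different indicator than the one in the lemma. A second, smaller point: the uniqueness of the linear BDSDE satisfied by $(D_{s}Y,D_{s}Z)$ rests on Lemma 2.2, whose hypotheses require the Lipschitz constant of the $dB$-coefficient in the $z$-variable to be some $\alpha<1$; this is satisfied here only because $g$ in (\ref{a1bis}) does not depend on $z$, so the linearized coefficient of $D_{s}Z$ in the $dB$-integral is identically zero. That deserves one sentence in your write-up.
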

\begin{lemma}
Suppose that $F\in \mbox{I\hspace{-.15em}D}^{1,2}$. Then

\begin{description}
\item  $(i)$(Integration by parts formula): for any $u \in Dom(\delta)$ such
that $Fu\in L^{2}([0,T]\times\Omega;\mbox{I\hspace{-.15em}R}^{d})$, one has $%
Fu \in Dom(\delta)$, and it holds that
\begin{eqnarray*}
\int_{0}^{T}\langle Fu_{t},dW_{t}\rangle=\delta(Fu)=F\int_{0}^{T}\langle
u_{t},dW_{t}\rangle-\int_{0}^{T}D_{t}Fu_{t}dt;
\end{eqnarray*}

\item  $(ii)$(Clark-Hausman-Ocone formula):
\begin{eqnarray*}
F=\mbox{I\hspace{-.15em}E}(F)+\int_{0}^{T}\mbox{I\hspace{-.15em}E}\{D_{t}F\mid
\mathcal{F}_{t}\}dW_{t}.
\end{eqnarray*}
\end{description}
\end{lemma}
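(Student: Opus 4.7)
Both parts are classical Malliavin-calculus identities on Wiener space and follow from the duality relation $\E[F\delta(u)]=\E\left[\int_0^T D_tF\cdot u_t\,dt\right]$ between the derivation $D$ and the divergence $\delta$, together with the density of the smooth class $\mathcal{S}$ in $\D^{1,2}$. Since the random variables here may depend on both $W$ and $B$, but $D$ acts only in the $W$-direction, I shall read all the Malliavin machinery conditionally on $\mathcal{F}_T^B$; equivalently, one views $\omega_2$ as a parameter in the $W$-Wiener space and applies the standard arguments of Nualart.

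For part (i), I would first verify the product rule $D_t(FG)=F\,D_tG+G\,D_tF$ for $F,G\in\mathcal{S}$, which is immediate from the chain rule applied to the defining cylindrical expression for $D_t\xi$. To show that $Fu\in\mathrm{Dom}(\delta)$ and identify $\delta(Fu)$, I test against an arbitrary $G\in\mathcal{S}$: the duality for $u$ together with the product rule give
\begin{align*}
\E\left[\int_0^T (Fu_t)\,D_tG\,dt\right]
&=\E\left[\int_0^T u_t\,D_t(FG)\,dt\right]-\E\left[\int_0^T (Gu_t)\,D_tF\,dt\right] \\
&=\E\left[G\left(F\delta(u)-\int_0^T D_tF\cdot u_t\,dt\right)\right],
\end{align*}
where in the last step I used the duality once more to replace $\E[\int_0^T u_t D_t(FG)\,dt]$ by $\E[FG\,\delta(u)]$. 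Since $Fu\in L^2([0,T]\times\Omega)$ by hypothesis and the right-hand side is a bounded linear functional of $G\in\mathcal{S}$, this is precisely the defining property of a Skorohod-integrable process; hence $Fu\in\mathrm{Dom}(\delta)$ and the announced formula holds. A truncation and density argument then extends the identity from $F\in\mathcal{S}$ to $F\in\D^{1,2}$.

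For part (ii), I would invoke the It\^o martingale representation theorem, conditionally on $\mathcal{F}_T^B$: there exists a unique $W$-predictable process $\phi\in L^2$ such that $F=\E[F]+\int_0^T\phi_t\,dW_t$. To identify $\phi$, I test against $\delta(u)$ for an arbitrary $W$-predictable $u\in L^2$. On one hand It\^o's isometry gives $\E[F\,\delta(u)]=\E\left[\int_0^T\phi_t\cdot u_t\,dt\right]$; on the other hand, the duality gives $\E[F\,\delta(u)]=\E\left[\int_0^T D_tF\cdot u_t\,dt\right]$, and the predictability of $u$ lets me replace $D_tF$ by $\E[D_tF\mid\mathcal{F}_t]$ without changing the integral. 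Since $u$ is arbitrary, $\phi_t=\E[D_tF\mid\mathcal{F}_t]$ almost everywhere, which is the Clark-Hausman-Ocone formula.

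The step I expect to require the most care is the filtration bookkeeping in (ii): the collection $\mathbf{F}$ introduced in the paper is not a filtration, so the martingale representation must be applied with a genuine filtration --- naturally $\mathcal{G}_t=\mathcal{F}_T^B\vee\mathcal{F}_{0,t}^W$ --- and the conditional expectation appearing in the Clark-Ocone formula has to be interpreted with respect to that filtration. Once this interpretation is fixed, the conditional independence between $B$ and $W$ reduces everything to the standard $W$-Wiener-space picture, and the integrability hypotheses on $Fu$ in (i) and on $D_tF$ in (ii) are exactly what is needed to pass from $\mathcal{S}$ to $\D^{1,2}$ by density.
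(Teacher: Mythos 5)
The paper offers no proof of this lemma at all: it is quoted ``for ready reference'' as a standard fact of anticipating stochastic calculus (it is essentially Nualart's integration-by-parts formula for the Skorohod integral together with the Clark--Haussmann--Ocone formula, as used by Pardoux--Peng and Ma--Zhang). Your argument is the standard duality proof of both identities, and it is essentially correct; in that sense you are supplying what the paper deliberately omits rather than diverging from it. Your remark about the filtration in part $(ii)$ is exactly the point that matters in this two-Brownian-motion setting: the collection ${\bf F}$ of the paper is not a filtration, so the martingale representation theorem must be invoked for the genuine filtration $\mathcal{G}_t=\mathcal{F}^{B}_{T}\vee\mathcal{F}^{W}_{0,t}$ (with $\omega_2$, i.e.\ the $B$-randomness, frozen as a parameter), and the conditional expectation in the Clark--Ocone formula is taken with respect to it.

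Two small points deserve more care than you give them. First, in part $(i)$ the assertion that $G\mapsto\E\bigl[\int_0^T (Fu_t)\,D_tG\,dt\bigr]$ is a bounded linear functional of $G\in L^2(\Omega)$ does not follow from $Fu\in L^{2}([0,T]\times\Omega)$ alone; it requires that the candidate value $F\delta(u)-\int_0^T D_tF\cdot u_t\,dt$ itself be square integrable. This hypothesis is part of the standard statement of the integration-by-parts formula and is silently assumed both by the paper and by your write-up; you should state it explicitly (in the paper's applications it holds because $F$ and $u$ come from solutions of the forward--backward system with all the needed moment bounds). Second, the closing ``truncation and density argument'' extending $(i)$ from $F\in\mathcal{S}$ to $F\in\D^{1,2}$ is not automatic, since products $Fu$ need not converge in the right topology under $\|\cdot\|_{1,2}$-approximation of $F$; the usual remedy is to first prove the identity for $F\in\mathcal{S}$ and bounded elementary $u$, and then pass to the limit using the closedness of $\delta$. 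Neither issue is a conceptual obstruction, but both are where the ``classical'' proof actually spends its effort.
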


\section{Relations to stochastic PDE revisited}
\setcounter{theorem}{0} \setcounter{equation}{0}
In this section we prove the relation (\ref{regularity}) between the forward
backward doubly SDE $(\ref{a1bis})$-$(\ref{a4})$ and the quasi-linear SPDE $(\ref{a3})$, under the condition that the coefficients are only continuously differentiable. Indeed, since Buckdahn and Ma $\cite{BM1,BM2}$ provide that, if $f$ and $l$ are only Lipschitz continuous, the quantity $\displaystyle{u(t,x)=Y_{t}^{t,x}}$ is a stochastic viscosity solution of the quasi-linear SPDE $(\ref{a3})$, relation in (\ref{regularity}) becomes questionable.  Our objective is to fill this gap in the  literature and to extend the results of Ma and Zhang $\cite{MZ}$ given in the case of the probabilistic interpretation of PDEs via the BSDEs.
\begin{theorem}
\label{T1}Assume $\left(\mathbf{A1}\right)$ and
$\left(\mathbf{A3}\right)$
and suppose that $f\in C_{b}^{0,1}([0,T]\times \mbox{I\hspace{-.15em}R}^{d}\times %
\mbox{I\hspace{-.15em}R}\times\mbox{I\hspace{-.15em}R}^{d})$ and
\,$l\in C_{b}^{1}(\mbox{I\hspace{-.15em}R}^{d})$. Let $%
(X^{t,x},Y^{t,x},Z^{t,x})$ be the adapted solution to the FBDSDE $(\ref{a1bis})$-$(\ref{a4})$,
and set $u(t,x)=Y_{t}^{t,x}$ the stochastic viscosity of SPDE $(\ref{a3})$. Then,
\item  $\left(i\right)$ $\partial_{x}u(t,x)$ exists for all $(t,x)\in
[0,T]\times \mbox{I\hspace{-.15em}R}^{d}$; and for each $(t,x)$ and
i=1,...,d, the following representation holds:
\begin{eqnarray}
\partial_{x_{i}}u(t,x)&=&\mbox{I\hspace{-.15em}E}\left\{%
\partial_{x}l(X_{0}^{t,x})\nabla_{i}X_{0}^{t,x}\right.\nonumber\\
&&\left.+\int^{t}_{0}[\partial_{x}f(r,\Xi^{t,x}(r))\nabla_{i}X_{r}^{t,x}+
\partial_{y}f(r,\Xi^{t,x}(r))\nabla_{i}Y_{r}^{t,x}+\partial_{z}f(r,\Xi^{t,x}(r))\nabla_{i}Z_{r}^{t,x}]dr\right.\nonumber\\
&&\left.+\int^{t}_{0}[\partial_{x}g(r,\Theta^{t,x}(r))\nabla_{i}X_{r}^{t,x}+
\partial_{y}g(r,\Theta^{t,x}(r))\nabla_{i}Y_{r}^{t,x}]dB_r\mid \mathcal{F}%
^{B}_{t}\right\}  \label{c1}
\end{eqnarray}
where $\Theta^{t,x}=(X^{t,x},Y^{t,x}),\;\Xi^{t,x}=(\Theta^{t,x},Z^{t,x})$,
and $(\nabla X^{t,x}, \nabla Y^{t,x}, \nabla Z^{t,x})$ the unique solution of equation $(\ref{b10})$;
\item  $\left(ii\right)$ $\partial_{x}u(t,x)$ is continuous on $[0,T]\times %
\mbox{I\hspace{-.15em}R}^{d}$;
\item  $\left(iii\right)$ $\displaystyle{Z_{s}^{t,x}=\partial_{x}
u(s,X^{t,x}_{s})\sigma(s,X_{s}^{t,x}), \forall\ s\in [0,t], \mbox{I%
\hspace{-.15em}P}}$-a.s.
\end{theorem}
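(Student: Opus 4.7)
The plan is to start from the Feynman--Kac-type representation (\ref{viscosity}) of $u(t,x)=Y_t^{t,x}$ and differentiate in $x$ underneath the conditional expectation $\E[\cdot\mid\mathcal F_t^B]$, identifying the $x$-derivative of the triple $(X^{t,x},Y^{t,x},Z^{t,x})$ with the solution of the linearised forward--backward doubly SDE (\ref{b10}). Part $(iii)$ is then obtained by composing the flow decomposition $Y_s^{t,x}=u(s,X_s^{t,x})$ with the Malliavin identity (\ref{b11}) of Lemma 2.3.

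\textbf{Proof of $(i)$.} Under $(\mathbf{A1})$--$(\mathbf{A3})$ with the present $C^{1}$ hypotheses on $f$ and $l$, I would first show that the map $x\mapsto(X^{t,x},Y^{t,x},Z^{t,x})$ is G\^ateaux-differentiable in $L^p(\mathbf F;[0,T])$, $p\ge 2$, with derivative in the direction $e_i$ equal to $(\nabla_iX^{t,x},\nabla_iY^{t,x},\nabla_iZ^{t,x})$ of (\ref{b10}). The standard device is to write the coupled equation satisfied by the remainders
\[
\Delta^{h}_{i,X}:=\frac{X^{t,x+he_i}-X^{t,x}}{h}-\nabla_iX^{t,x},\qquad \Delta^{h}_{i,Y},\ \Delta^{h}_{i,Z}\ \text{analogously},
\]
and invoke the $L^p$-estimates of Lemmas 2.1--2.2 together with the uniform continuity of $\partial_xb,\partial_x\sigma,\partial_{(x,y,z)}f,\partial_{(x,y)}g$ on bounded sets, to conclude that $\E(|\Delta^h_{i,X}|^{*,p}_{0,T}+|\Delta^h_{i,Y}|^{*,p}_{0,T})+\E\int_0^T|\Delta^h_{i,Z}|^{2}dr\to 0$ as $h\to 0$ (the extra smoothness of $g$ in $(\mathbf{A3})$ is stronger than needed). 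Granted this, the difference quotient $(u(t,x+he_i)-u(t,x))/h$ is the conditional expectation of the difference quotients of $l(X_0^{t,\cdot})$, $\int_0^t f(r,\Xi^{t,\cdot}(r))\,dr$ and $\int_0^t g(r,\Theta^{t,\cdot}(r))\,dB_r$; a coordinatewise mean-value expansion of $l,f,g$ plus dominated convergence (the dominating bounds being furnished by the previous step) then produces exactly (\ref{c1}).

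\textbf{Proof of $(ii)$.} Formula (\ref{c1}) expresses $\partial_{x_i}u(t,x)$ as an $L^2$-continuous functional of the six processes $(X^{t,x},Y^{t,x},Z^{t,x},\nabla_iX^{t,x},\nabla_iY^{t,x},\nabla_iZ^{t,x})$. The standard stability estimates for the forward SDE (\ref{a4}), the BDSDE (\ref{a1bis}) and the linear system (\ref{b10}) (whose coefficients depend continuously on $(t,x)$ through $(X^{t,x},Y^{t,x},Z^{t,x})$) give $L^2$-continuity of all six processes in $(t,x)$; combined with the uniform continuity of $\partial_\cdot l,\partial_\cdot f,\partial_\cdot g$ on compact sets, dominated convergence passes the limit inside (\ref{c1}) and gives continuity of $\partial_xu$ on $[0,T]\times\R^d$.

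\textbf{Proof of $(iii)$ and main obstacle.} The key algebraic input here is the flow identity $Y_s^{t,x}=u(s,X_s^{t,x})$ for $0\le s\le t$, obtained from uniqueness for the BDSDE applied to the coupled system restarted at $(s,X_s^{t,x})$. Differentiating this identity in $x$, using the differentiability already established, yields
\[
\nabla_i Y_s^{t,x}=\partial_x u\bigl(s,X_s^{t,x}\bigr)\,\nabla_iX_s^{t,x},\qquad 0\le s\le t.
\]
On the other hand, Lemma 2.3 supplies $D_sY_r=\nabla Y_r(\nabla X_s)^{-1}\sigma(s,X_s)$; combined with the BDSDE analogue of the Pardoux--Peng identification $Z_s=D_sY_s$ this gives $Z_s^{t,x}=\nabla Y_s^{t,x}(\nabla X_s^{t,x})^{-1}\sigma(s,X_s^{t,x})$. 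Substituting the previous display and cancelling the factor $\nabla X(\nabla X)^{-1}$ delivers $(iii)$. The hard part is justifying the flow identity itself in the doubly-stochastic setting: $u$ is $\mathcal F^B$-measurable while $X_s^{t,x}$ is driven backwards by $W$, so the composition lives on the product space and the usual BSDE flow argument must be adapted to accommodate simultaneously the backward $B$-integral and the backward-in-time $W$-integral. The non-filtration nature of $\mathbf F$ flagged in the introduction is precisely the phenomenon that forces this extra care; once the flow identity is in hand, the remainder is a routine manipulation of (\ref{b10}) and (\ref{b11}).
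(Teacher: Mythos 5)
Your part $(i)$ follows the paper's route (difference quotients converging to the solution of the variational equation (\ref{b10}), then reading off (\ref{c1})), but you skip the one step that makes the statement meaningful: since $Y_t^{t,x}$ a priori lives on the product space $\Omega_1\times\Omega_2$, one must argue that $u(t,x)$, $u(t,x+h)$ and the difference quotients are constant in $\omega_2$ before $\partial_x u$ can be called an $\mathcal F^B$-measurable random field and before the limit of $\nabla Y^h_t$ can be written as the conditional expectation (\ref{c1}). The paper does this via the Blumenthal $0$-$1$ law applied to $W$ on $(\Omega_2,\mathcal F_2,\P_2)$, using that all these processes are adapted to $\mathcal F_s^B\otimes\mathcal F^W_{s,t}$; you should include that step.

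Part $(ii)$ has a genuine gap. The claim is pathwise continuity of the random field $(t,x)\mapsto\partial_xu(t,x)$, and $L^2$-continuity plus dominated convergence only yields continuity in probability, which is strictly weaker. Moreover $\partial_xu(t,x)=\E\{A(t,x)\mid\mathcal F^B_t\}$ with the conditioning $\sigma$-field itself moving with $t$; your argument controls the variation of $A(t,x)$ but says nothing about the variation of the conditioning. The paper splits the increment into $\E\{A(t_1,x_1)-A(t_2,x_2)\mid\mathcal F^B_{t_1}\}$, handled by the moment bound $\E|A(t_1,x_1)-A(t_2,x_2)|^2\le C(|t_1-t_2|^2+|x_1-x_2|^2)$ together with the Kolmogorov--Centsov theorem, and the term $|\E\{A(t_2,x_2)\mid\mathcal F^B_{t_1}\}-\E\{A(t_2,x_2)\mid\mathcal F^B_{t_2}\}|$, which is killed by the quasi-left-continuity of $(\mathcal F^B_s)$. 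Both ingredients are missing from your sketch. For part $(iii)$ you take a genuinely different route (flow identity $Y_s^{t,x}=u(s,X_s^{t,x})$ differentiated in $x$, combined with the Malliavin identities of Lemma 2.3 and $Z_s=D_sY_s$), but you yourself flag the flow identity as "the hard part" and never establish it; as it stands this is not a proof. The paper avoids the flow identity altogether: it mollifies $b,\sigma,l,f$ to $C^\infty$ with uniformly bounded first derivatives, invokes Pardoux--Peng's classical-solution theorem to get $Z^\varepsilon_s=\partial_xu^\varepsilon(s,X^\varepsilon_s)\sigma^\varepsilon(s,X^\varepsilon_s)$ outright, and then passes to the limit using the $L^p$-stability of $(X^\varepsilon,Y^\varepsilon,Z^\varepsilon)$ and of $(\nabla X^\varepsilon,\nabla Y^\varepsilon,\nabla Z^\varepsilon)$ from Lemmas 2.1--2.2 to conclude $\partial_xu^\varepsilon(t,x^\varepsilon)\to\partial_xu(t,x)$ and hence the identity $ds\times d\P$-a.e., upgraded to all $s$ by continuity. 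Either adopt that approximation argument or supply a complete proof of the flow identity in the doubly stochastic setting; the latter is precisely the delicate point you have deferred.
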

\begin{proof}
For the simple presentation we take $d=1$. The higher dimensional case can be treated
in the same way without substantial difficulty. We use the simpler
notations $l_{x}, (f_{x},f_{y},f_{z}), (g_{x},g_{y},g_{z})$ respectively for the
partial derivatives of $l, f $ and $g$.\\
The proof is inspired by the approach of Ma and Zhang $\cite{MZ}$ (see Theorem $3.1$). Nevertheless, there exists slight difference due in the fact that the solution of SPDE's is a random field; more precisely will show that it is a conditional expectation with respect the filtration $({\mathcal{F}}^{B}_{t})_{0\leq t\leq T}$.

We first prove $(i)$. Let $(t,x)\in[0,T]\times\R$ be fixed. For $h\neq 0$, we define:
\begin{eqnarray*}
\nabla X_{s}^{h}=\frac{X_{s}^{t,x+h}-X_{s}^{t,x}}{h}; \nabla Y_{s}^{h}=\frac{%
Y_{s}^{t,x+h}-Y_{s}^{t,x}}{h}; \nabla Z_{s}^{h}=\frac{
Z_{s}^{t,x+h}-Z_{s}^{t,x}}{h}\;\;\; s\in [0,t].
\end{eqnarray*}
It follows analogously of the proof of Theorem 2.1 in \cite{PP4}) that
\begin{eqnarray}
\E\{|\Delta Y^{h}|^{*,2}_{0,t}=\E\{|\nabla Y^{h}-\nabla
Y^{t,x}|^{*,2}_{0,t}\}\rightarrow 0 \ as\ h \rightarrow 0.  \label{cp4}
\end{eqnarray}
We know also that processes $Y^{t,x},\;Y^{t,x+h},\;\nabla Y^{h}$ and $\Delta Y^{h}$ are all adapted to the $\sigma$-algebra ${\bf F}^{t}=(\mathcal{F}^{t}_s)_{0\leq s\leq t}$, where $\mathcal{F}^{t}_s=\mathcal{F}^{B}_{s}\otimes\mathcal{F}^{W}_{s,t}$. In particular, since $W$ is a Brownian motion on $(\Omega_2, \mathcal{F}_2,\P_2)$, applying the Blumenthal $0$-$1$ law (see, e.g, \cite{KS}), $Y^{t,x}_{t}=u(t,x),\;Y^{t,x+h}_{t}=u(t,x+h),\;\nabla Y^{h}_{t}=\frac{1}{h}[u(t,x+h)-u(t,x)]$ and $\Delta Y^{h}_{t}$ are all independent of (or a constant with respect to) $\omega_{2}\in\Omega_2$. Therefore we conclude from the above that $\partial_{x}u$ exist, as the random field and $\partial_{x}u(t,x)=\nabla Y^{t,x}_t$, for all $(t,x)$.
Finally, taking the conditional expectation on the both sides of (\ref{b10}) at $s=t$, the representation (\ref{c1}) hold and finish the prove of $(i)$.

We now prove (ii). Let $(t_i,x_i)\in[0,T]\times\R,\, i=1,2$. Knowing that $t_1$ and $t_2$ played inverse roles one another, we assume without losing a generality that $t_1 <t_2$. Since $\partial_{x} u$ is a conditional expectation with respect the filtration $(\mathcal{F}^{B}_s)_{0\leq s\leq t}$, we have
\begin{eqnarray}
|\partial_{x}u(t_1,x_1)-\partial_{x}u(t_2,x_2)|&\leq&\E\{A(t_1,x_1)-A(t_2,x_2) \mid\mathcal{F}^{B}_{t_1}\}\nonumber\\
&&+\left|\E\{A(t_2,x_2)\mid\mathcal{F}^{B}_{t_1}\}-\E\{A(t_2,x_2)\mid\mathcal{F}^{B}_{t_2}\}\right|,
\label{o1}
\end{eqnarray}
where
\begin{eqnarray}
A(t,x)&=&l_x(X_{0}^{t,x})\nabla X_{0}^{t,x}\nonumber\\
&&+\int_{0}^{t }[f_x(r,\Xi^{t,x}(r))\nabla X_{r}^{t,x}+
f_y(r,\Xi^{t,x}(r))\nabla Y_{r}^{t,x}+f_z(r,\Xi^{t,x}(r))\nabla Z_{r}^{t,x}]\,dr\nonumber\\
&&+\int^{t}_{0}[g_x(r,\Theta^{t,x}(r))\nabla X_{r}^{t,x}+
g_y(r,\Theta^{t,x}(r))\nabla Y_{r}^{t,x}]dB_r.\label{q}
\end{eqnarray}
 Thanks to the quasi-left-continuity of $(\mathcal{F}^{B}_s)_{0\leq s\leq t}$, we see that
 \begin{eqnarray}
\lim_{t_{1}\downarrow t_{2}}\left|\E(A(t_2,x_2) \mid\mathcal{F}^{B}_{t_1})-\E(A(t_2,x_2)\mid\mathcal{F}^{B}_{t_2})\right|=0,\label{leftcont}
\end{eqnarray}
independently of $x_{2}$. In virtue of $(\ref{o1})$ and $(\ref{leftcont})$), to prove $(ii)$ it remain to show that
\begin{eqnarray}
\lim_{t_{1}\downarrow t_{2}x_{1}\rightarrow x_{2}}\E\{A(t_1,x_1)-A(t_2,x_2) \mid\mathcal{F}^{B}_{t_1}\}=0.\label{cont1}
\end{eqnarray}
To this end, since $A(t,x)$ is a stochastic process and in virtue of Kolmogorov-Centsov Theorem (see \cite{KS}), it suffices to show that
\begin{eqnarray*}
\E\left(|A(t_1,x_1)-A(t_2,x_2)|^{2}\right)\leq C(|t_1-t_2|^{2}+|x_1-x_2|^{2}),
\end{eqnarray*}
what we do now. Recalling the definition of $A(t_i,x_i),\, i=1,2$ and denoting
$$G^{t,x}(r)=f_x(r,\Xi^{t,x}(r))\nabla X_{r}^{t,x}+
f_y(r,\Xi^{t,x}(r))\nabla Y_{r}^{t,x}+f_z(r,\Xi^{t,x}(r))\nabla Z_{r}^{t,x}$$
and
$$H^{t,x}(r)=g_x(r,\Theta^{t,x}(r))\nabla X_{r}^{t,x}+
g_y(r,\Theta^{t,x}(r))\nabla Y_{r}^{t,x},$$ we get
\begin{eqnarray*}
|A(t_1,x_1)-A(t_2,x_2)|&\leq& |l_x(X^{t_1,x_1}_{0})\nabla X^{t_1,x_1}_{0}-l_x(X^{t_2,x_2}_{0})\nabla X^{t_2,x_2}_{0}|\\
&&+\int_{0}^{t_1}|G^{t_1,x_1}(r)-G^{t_2,x_2}(r)|dr+\left|\int_{0}^{t_1}(H^{t_1,x_1}(r)-H^{t_2,x_2}(r))dB_r\right|\\
&&+\int_{t_1}^{t_2}|G^{t_2,x_2}(r)|dr+\left|\int_{t_1}^{t_2}H^{t_2,x_2}(r)dB_r\right|.
\end{eqnarray*}
Taking the expectation, it follows by Hölder's and Burkölder-Gundy Davis inequalities that
\begin{eqnarray*}
\E\left(|A(t_1,x_1)-A(t_2,x_2)|^{2}\right)&\leq& C\E\left\{|l_x(X^{t_1,x_1}_{0})\nabla X^{t_1,x_1}_{0}-l_x(X^{t_2,x_2}_{0})\nabla X^{t_2,x_2}_{0}|^{2}\right.\\
&&+\left.\int_{0}^{t_1}|G^{t_1,x_1}(r)-G^{t_2,x_2}(r)|^{2}dr+\left|\int_{0}^{t_1}(H^{t_1,x_1}(r)-H^{t_2,x_2}(r))dB_r\right|^{2}\right.\\
&&+\left.\int_{t_1}^{t_2}|G^{t_2,x_2}(r)|^{2}dr+\left|\int_{t_1}^{t_2}H^{t_2,x_2}(r)dB_r\right|^{2}\right\}\\
&\leq&C\E\left\{|l_x(X^{t_1,x_1}_{0})\nabla X^{t_1,x_1}_{0}-l_x(X^{t_2,x_2}_{0})\nabla X^{t_2,x_2}_{0}|^{2}\right.\\
&&+\left.\int_{0}^{t_1}|G^{t_1,x_1}(r)-G^{t_2,x_2}(r)|^{2}dr+\int_{0}^{t_1}|H^{t_1,x_1}(r)-H^{t_2,x_2}(r)|^{2}dr\right.\\
&&+\left.(t_2-t_1)\int_{t_1}^{t_2}|G^{t_2,x_2}(r)|^{2}dr+(t_2-t_1)\int_{t_1}^{t_2}|H^{t_2,x_2}(r)|^{2}dr\right\}.
\end{eqnarray*}
By similar standard computations in Ma and Zhang \cite{MZ} (see proof of Theorem 3.1), we obtain
\begin{eqnarray*}
\E\left(|A(t_1,x_1)-A(t_2,x_2)|^{2}\right)\leq C(|t_2-t_1|^{2}+|x_2-x_1|^2)
\end{eqnarray*}
that provide the proof of $(ii)$.\newline
It remains to prove $(iii)$. For a continuous function $\varphi$, let
us consider $\{\varphi^{\varepsilon}\}_{\varepsilon>0}$ a family of $%
C^{0,\infty}$ functions that converges to $\varphi$ uniformly. Since $%
b,\ \sigma,\ l,\ f$ are all uniformly Lipschitz continuous, we
may assume that the first order partial derivatives of $b^{\varepsilon},\ \sigma^{%
\varepsilon},\ l^{\varepsilon},\ f^{\varepsilon}$
are all uniformly bounded, by the corresponding Lipschitz constants of $b,\sigma,l,f$ uniformly in $\varepsilon>0$. Now we consider the family of FBDSDEs parameterized by $\varepsilon >0$:
\begin{eqnarray}
\left\{
\begin{array}{l}
X^{t,x}_{s}=x+\int^{t}_{s}b^{\varepsilon}(r,X^{t,x}_{r})dr+\int^{t}_{s}
\sigma^{\varepsilon} (r,X^{t,x}_{r})\downarrow dW_{r}; \\
\\
Y_{s}^{t,x}=l^{\varepsilon}(X_{0}^{t,x})+\int_{0}^{s}f^{%
\varepsilon}(r,X_{r}^{t,x},Y_{r}^{t,x},
Z_{r}^{t,x})dr+\int_{0}^{s}g(r,X_{r}^{t,x},Y_{r}^{t,x})dB_{r}-\int_{0}^{s}Z_{r}^{t,x}\downarrow dW_{r}
\end{array}
\right.  \label{pc13}
\end{eqnarray}
and denote it solution by $(X^{t,x}({\varepsilon}),Y^{t,x}({\varepsilon}%
),Z^{t,x}({\varepsilon}))$. We define $\displaystyle{u^{%
\varepsilon}(t,x)=Y_{t}^{t,x}({\varepsilon})}$. Theorem $3.2$ of $
\cite{PP4}$ provide that $u^{\varepsilon}$ is the classical solution of
stochastic PDE
\begin{eqnarray}
\begin{array}{l}
du^{\varepsilon}(t,x)=[\mathcal{L}^{\varepsilon}u(t,x)+f^{\varepsilon}(t,x,u^{\varepsilon}(t,x),(\nabla
u^{\varepsilon}\sigma^{\varepsilon})(t,x))]\;dt  \\\\
\qquad\qquad+g(t,x,u^{\varepsilon}(t,x))\;dB_{t},\,\, (t,x)\in (0,T)\times\R^{d},\\\\
u^{\varepsilon}(0,x)=l^{\varepsilon}(x),\qquad\qquad\,\,\,x\in\R^{d}.
\end{array}
\label{pc14}
\end{eqnarray}
For any $\{x^{\varepsilon}\}\subset\mbox{I\hspace{-.15em}R}^{n}$
such that $x^{\varepsilon}\rightarrow x$ as $\varepsilon \rightarrow 0$,
define \newline
$(X^{\varepsilon},Y^{\varepsilon},Z^{\varepsilon})=(X^{t,x^{\varepsilon}}(%
\varepsilon)
,Y^{t,x^{\varepsilon}}(\varepsilon),Z^{t,x^{\varepsilon}}(\varepsilon))$.
Then it is well know according the work of Pardoux and Peng \cite{PP4} that
\begin{eqnarray}
Y^{\varepsilon}_{s}=u^{\varepsilon}(s,X^{\varepsilon}_{s});\,\,\,\,\
Z^{\varepsilon}_{s}=\partial_{x}u^{\varepsilon}(s,X^{\varepsilon}_{s})%
\sigma^{\varepsilon} (s,X^{\varepsilon}_{s}),\,\,\,\ \forall\ s\in[0,t],
\,\, \mbox{I\hspace{-.15em}P}-a.s.  \label{pc15}
\end{eqnarray}
Now by Lemma $2.1$ and Lemma $2.2$, for all $p\geq 2$ it hold that
\begin{eqnarray}
\mbox{I\hspace{-.15em}E}\left\{|X^{\varepsilon}-X|^{*,p}_{0,t}+|Y^{%
\varepsilon}-Y|^{*,p}_{0,t}+
\int_{0}^{t}|Z^{\varepsilon}_{s}-Z_{s}|^{2}ds\right\}\rightarrow 0
\label{pc16}
\end{eqnarray}
as $\varepsilon \rightarrow 0$. Moreover let us recall $(\nabla
X^{\varepsilon},\nabla Y^{\varepsilon},\nabla Z^{\varepsilon})$ the unique solution of the
variational equation of $(\ref{pc13})$. Using again Lemma $2.1$ and Lemma $2.2$ we get
\begin{eqnarray}
\mbox{I\hspace{-.15em}E}\left\{|\nabla X^{\varepsilon}-\nabla
X|^{*,p}_{0,t}+|\nabla Y^{\varepsilon}-\nabla
Y|^{*,p}_{0,t}+\int_{0}^{t}|\nabla Z^{\varepsilon}_{s}-\nabla
Z_{s}|^{2}ds\right\}\rightarrow 0,  \label{pc18}
\end{eqnarray}
as $\varepsilon \rightarrow 0$.
Thus it is readily seen that
\begin{eqnarray*}
\E\{l^{\varepsilon}_{x}(X_{0}^{\varepsilon})\nabla X^{\varepsilon}_{0}|\mathcal{F}^{B}_t\}\rightarrow\E\{l_{x}(X_{0})\nabla X_{0}|\mathcal{F}^{B}_t\},
\end{eqnarray*}
$\P$-a.s., as $\varepsilon\rightarrow 0$. Furthermore, by the analogue step used in \cite{MZ}, one can show that
\begin{eqnarray*}
\E\left\{\int_{0}^{t}[f^{\varepsilon}_{x}(r)\nabla X^{\varepsilon}_{r}+f^{\varepsilon}_{y}(r)\nabla
Y^{\varepsilon}_{r}+f^{\varepsilon}_{z}(r)\nabla Z^{\varepsilon}_{r}]dr+\int_{0}^{t}[g_{x}(r)\nabla X^{\varepsilon}_{r}+g_{y}(r)\nabla
Y^{\varepsilon}_{r}]dB_r|\mathcal{F}^{B}_t\right\}
\end{eqnarray*}
converge to
\begin{eqnarray*}
\E\left\{\int_{0}^{t}[f_{x}(r)\nabla X_{r}+f_{y}(r)\nabla Y_{r}+f_{z}(r)\nabla Z_{r}]dr+\int_{0}^{t}[g_{x}(r)\nabla X_{r}+g_{y}(r)\nabla Y_{r}]dB_r|\mathcal{F}^{B}_t\right\}
\end{eqnarray*}
$\P$-a.s., as $\varepsilon\rightarrow 0$.
Therefore, we get
\begin{eqnarray*}
\partial_{x} u^{\varepsilon}(t,x^{\varepsilon})\rightarrow \partial_{x} u(t,x),\,\,\,\mbox\,\,{as} \,\,\,\varepsilon \rightarrow 0\,\,\ \P-a.s.,
\end{eqnarray*}
for each fixed $(t,x)\in[0, T]\times\R$.
Consequently, possibly along a subsequence, we obtain
\begin{eqnarray*}
Z_{s}^{\varepsilon}=\lim_{\varepsilon\rightarrow 0}\partial
u^{\varepsilon}(s,X^{\varepsilon}_{s})\sigma^{\varepsilon}(s,X^{\varepsilon})=\partial
u(s,X_{s})\sigma(s,X_{s}), \,\,\,\,\,\ ds\times d\mbox{I\hspace{-.15em}P}%
-a.e.
\end{eqnarray*}
Since for $\P-a.e.$\,\, $\omega,$\,\,\,\
$\partial_{x} u(.,.)$ and $X$ are both continuous, the above equalities
actually holds for all $s\in [0,t]$, \P-a.s.,
proving $(iii)$ and end the proof.
\end{proof}

The following corollary is the direct consequence of the
Theorem $\ref{T1}$. The convention on the generic constant $C>0$ still true.

\begin{corollary}
\label{col}
Assume that the same conditions as in Theorem $\ref{T1}$ hold, and let $%
(X^{t,x},Y^{t,x},Z^{t,x})$ be the solution of FBDSDE $(\ref{a1bis})$-$(\ref{a4})$. Then, there exists a constant $%
C>0$ depending only on $K,\;T,$ and for any $p\geq 1,$ a positive $L^{p}(\Omega,(\mathcal{F}^{t}_s)_{0\leq s\leq t},\P)$-process $\Gamma^{t,x}$, such that
\begin{eqnarray}
|\partial_{x}u(t,x)|\leq C\Gamma_{t}^{t,x},\,\,\,\,\ \forall\ (t,x)\in[0,T]\times%
\mbox{I\hspace{-.15em}R}^{d},\,\,\ \mbox{I\hspace{-.15em}P}-a.s.  \label{cc1}
\end{eqnarray}
Consequently, one has
\begin{eqnarray}
|Z_{s}^{t,x}|\leq C\Gamma_{s}^{t,x}(1+|X_{s}^{t,x}|),\,\,\,\,\,\,\, \forall s\in [0,t],\,\,\,\ %
\mbox{I\hspace{-.15em}P}-a.s.  \label{cc2}
\end{eqnarray}
Furthermore, $\forall\ p>1$, there exists a constant $C_{p}>0$, depending on $%
K,\ T$, and $p$ such that
\begin{eqnarray}
\mbox{I\hspace{-.15em}E}\left\{|X^{t,x}|^{*,p}_{0,t}+|Y^{t,x}|^{*,p}_{0,t}+|Z^{t,x}|^{*,p}_{0,t}\right\}%
\leq C_{p}(1+|x|^{p}).  \label{cc3}
\end{eqnarray}
\end{corollary}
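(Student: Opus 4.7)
The plan is to read (cc1) off directly from the representation formula $(3.1)$ of Theorem~\ref{T1}, then obtain (cc2) by inserting this pointwise bound into identity (iii) of the same theorem, and finally deduce the moment bounds (cc3) from the standard estimates of Lemmas~2.1--2.2 for $X^{t,x}$ and $Y^{t,x}$, together with (cc2) for the process~$Z^{t,x}$.

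For (cc1), observe that under the hypotheses of Theorem~\ref{T1} the partial derivatives $l_x$, $f_x, f_y, f_z$, $g_x, g_y$ are all uniformly bounded by a constant depending only on $K$ (using $l\in C_b^1$, $f\in C_b^{0,1}$, and $(\mathbf{A3})$). Taking absolute values inside the conditional expectation in $(3.1)$ and invoking conditional Jensen gives
\begin{equation*}
|\partial_x u(t,x)| \leq C\,\E\Bigl\{|\nabla X_0^{t,x}| + \int_0^t (|\nabla X_r^{t,x}| + |\nabla Y_r^{t,x}| + |\nabla Z_r^{t,x}|)\,dr + \Bigl|\int_0^t H_r^{t,x}\,dB_r\Bigr| \,\Bigm|\, \mathcal{F}^B_t\Bigr\},
\end{equation*}
where $H^{t,x}_r = g_x(r,\Theta^{t,x}_r)\nabla X^{t,x}_r + g_y(r,\Theta^{t,x}_r)\nabla Y^{t,x}_r$. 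Denoting the right-hand side by $C\,\Gamma^{t,x}_t$ yields (cc1); the $L^p$-integrability of $\Gamma^{t,x}$ for arbitrary $p\geq 2$ follows by applying Lemma~2.1 to $\nabla X^{t,x}$ and Lemma~2.2 to $(\nabla Y^{t,x}, \nabla Z^{t,x})$ in the linear variational system $(2.10)$, together with Burkholder-Davis-Gundy to absorb the $dB$-integral.

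For (cc2), I would use assertion (iii) of Theorem~\ref{T1}: $Z_s^{t,x} = \partial_x u(s, X_s^{t,x})\sigma(s, X_s^{t,x})$. Inserting (cc1) at the random point $(s,X_s^{t,x})$ and using the linear growth of $\sigma$ from $(\mathbf{A1})$--$(\mathbf{A2})$ yields $|Z^{t,x}_s| \leq C\,\Gamma^{s,X^{t,x}_s}_s\,(1+|X^{t,x}_s|)$, and setting $\Gamma^{t,x}_s := \Gamma^{s,X^{t,x}_s}_s$ gives (cc2); here one must first upgrade (cc1) to hold pointwise in $(t,x)$ on a full-measure event, which is possible thanks to the continuity of $\partial_x u$ established in Theorem~\ref{T1}(ii). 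Finally, for (cc3), the sup-bounds on $X^{t,x}$ and $Y^{t,x}$ are direct applications of Lemmas~2.1 and~2.2 under $(\mathbf{A1})$--$(\mathbf{A3})$, once one notes that $|l(X_0^{t,x})|\leq K(1+|X_0^{t,x}|)$ and that $|b(\cdot,0)|, |\sigma(\cdot,0)|, |f(\cdot,0,0,0)|, |g(0)|$ are bounded by~$K$. The sup-in-time $L^p$ bound on $Z^{t,x}$ is the main obstacle, since Lemma~2.2 only provides $L^2$ control of $\int_0^t |Z_s|^2\,ds$ and cannot directly yield a pathwise estimate; here (cc2) is essential, reducing the task to bounding $\E\sup_{s\leq t}[\Gamma^{t,x}_s(1+|X^{t,x}_s|)]^p$ via Cauchy-Schwarz and the higher-moment estimates on the variational system derived in the preceding step.
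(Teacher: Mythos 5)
Your proposal is correct and follows essentially the same route as the paper: bound the right-hand side of the representation $(3.1)$ by a conditional expectation $\Gamma^{t,x}$ of the variational processes (whose $L^p$-moments are controlled by Lemmas 2.1--2.2), then feed this into Theorem 3.1(iii) together with the linear growth of $\sigma$ to get the pointwise bound on $Z$, and finally combine with the standard moment estimates for $X$ and $Y$ to obtain the sup-in-time $L^p$ bound. Your remarks on upgrading (cc1) to a full-measure event before substituting the random point $(s,X^{t,x}_s)$ are points the paper passes over silently, but they do not change the argument.
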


\begin{proof}
We assume first that $p\geq 2$. The case $1<p<2$ then follows
easily from H\"{o}lder inequality. By Lemma $2.1$ and Lemma $2.2$, we can find constant $C>0$ such that
\begin{eqnarray*}
\E\left\{|\nabla X^{t,x}|^{*,p}_{0,t}+|\nabla
Y^{t,x}|^{*,p}_{0,t}+\left(\int_{0}^{T}|\nabla Z_{r}^{t,x}|^{2}dr\right)^{p/2}\right\}\leq C.
\end{eqnarray*}
Then, from the identity $(\ref{c1})$, we deduce immediately that $|\partial_{x}u(t,x)|\leq C\Gamma_{t}^{t,x}$, for all $(t,x)\in[0,T]\times\R$,
where
$$
\Gamma_{s}^{t,x}=\E\left(|\nabla X_{0}^{t,x}|+\int_{0}^{s}[|\nabla X_{r}^{t,x}|+|\nabla Y_{r}^{t,x}|+|\nabla Z_{r}^{t,x}|]dr+\left|\int_{0}^{s}[\nabla X_{r}^{t,x}+\nabla Y_{r}^{t,x}]dB_r\right|\mid\mathcal{F}_{s}^{t}\right).$$ Moreover we get for $s\in[0,t],\;\E(|\Gamma^{t,x}_s|^{p})\leq C$. Then Theorem $\ref{T1}$ $(iii)$ implies that
\begin{eqnarray*}
|Z_{s}^{t,x}|\leq C\Gamma_{s}^{t,x}(1+|X_{s}^{t,x}|),\,\,\,\,\,\ \forall s\in[0,t],\,\,\,\,\ \P\mbox{-a.s.}
\end{eqnarray*}
Now, applying again Lemma $2.1$ and $2.2$ and recalling $(\ref{cc2})$ we get $(\ref{cc3})$, for $p\geq 2$.
\end{proof}

To conclude this section, we would like to point out that in Theorem $3.1$, the functions $f$ and $l$ are assumed to be continuously differentiable in all spatial variables with uniformly bounded partial derivatives, which is much stronger than standing assumption $({\bf A2})$. The following theorem reduces the requirement on $f$ and $l$ to only uniformly Lipschitz continuous, which will be important in our future discussion.
\begin{theorem}
Assume $(\mathbf{A1})$-$(\mathbf{A4})$, and let $(X,Y,Z)$ be the
solution to the FBDSDE $(\ref{a1bis})$-$(\ref{a4})$. Then for all $p>0$, there
exists a constant $C_{p}>0 $ such that
\begin{eqnarray}
\mbox{I\hspace{-.15em}E}\left\{|X|^{*,p}_{0,t}+|Y|^{*,p}_{0,t}+ess\sup_{0
\leq s\leq t}|Z_{s}|^{p}\right\}\leq C_{p}(1+|x|^{p}).  \label{ct1}
\end{eqnarray}
\end{theorem}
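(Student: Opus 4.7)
The bounds on $X$ and $Y$ are essentially standard. For $X^{t,x}$, Lemma 2.1 applied to (\ref{a4}) with (A1) yields $\mathbb{E}|X^{t,x}|^{*,p}_{0,t}\leq C_p(1+|x|^p)$. For $Y^{t,x}$, Lemma 2.2 applied to (\ref{a1bis}) with terminal variable $\xi=l(X_0^{t,x})$, together with the Lipschitz bound $|l(X_0^{t,x})|\leq K(1+|X_0^{t,x}|)$ and the estimate on $X$, gives $\mathbb{E}|Y^{t,x}|^{*,p}_{0,t}\leq C_p(1+|x|^p)$. The real substance of the theorem is therefore the essential supremum bound on $Z$, which is strictly stronger than the $L^2(ds\otimes d\mathbb{P})$ bound (\ref{b7}) and does not follow from Lemma 2.2 alone.

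The plan for $Z$ is an approximation argument exploiting Corollary \ref{col}, whose constants depend only on $K$, $T$, $p$ and not on higher derivatives of the coefficients. First, mollify $f$ and $l$ in their spatial arguments to produce $f^\varepsilon\in C_b^{0,1}([0,T]\times\mathbb{R}^d\times\mathbb{R}\times\mathbb{R}^d)$ and $l^\varepsilon\in C_b^1(\mathbb{R}^d)$ whose partial derivatives are all dominated by the common Lipschitz constant $K$ of (A2), and with $f^\varepsilon\to f$, $l^\varepsilon\to l$ uniformly on compacts as $\varepsilon\to 0$; the driver $g$ and the coefficients $b,\sigma$ already meet the required smoothness in (A3), (A1), so are left unchanged. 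Denote by $(X^{t,x}(\varepsilon),Y^{t,x}(\varepsilon),Z^{t,x}(\varepsilon))$ the solution of the corresponding FBDSDE (\ref{pc13}). Theorem \ref{T1} and Corollary \ref{col} then apply uniformly in $\varepsilon$, producing the pointwise-in-$s$ bound
\begin{eqnarray*}
|Z_s^{t,x}(\varepsilon)|\leq C\,\Gamma_s^{t,x}(\varepsilon)(1+|X_s^{t,x}(\varepsilon)|),\qquad s\in[0,t],\ \mathbb{P}\text{-a.s.},
\end{eqnarray*}
together with the $L^p$-estimate (\ref{cc3}) for $(X(\varepsilon),Y(\varepsilon),Z(\varepsilon))$, all with constants independent of $\varepsilon$.

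Finally, the stability estimates (\ref{pc16}) give $(X^\varepsilon,Y^\varepsilon)\to (X,Y)$ in $L^p$ sense and $Z^\varepsilon\to Z$ in $L^2(ds\otimes d\mathbb{P})$; extracting a subsequence one may assume $Z^{\varepsilon_n}_s\to Z_s$ for $ds\otimes d\mathbb{P}$-a.e.\ $(s,\omega)$. Fatou's lemma then transfers the uniform bound
\begin{eqnarray*}
\mathbb{E}\bigl\{|Z^{t,x}(\varepsilon_n)|^{*,p}_{0,t}\bigr\}\leq C_p(1+|x|^p)
\end{eqnarray*}
to the essential supremum of the limit $Z$, yielding (\ref{ct1}). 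The main obstacle is precisely this last step: because $Z$ need not be $s$-continuous under merely Lipschitz data on $f,l$, one must pass from genuine suprema of the continuous approximants $Z^{\varepsilon_n}$ to the essential supremum of $Z$. The cleanest route is to write $\mathrm{ess\,sup}_{s\in[0,t]}|Z_s|$ as the supremum over a countable dense subset of $[0,t]$ where the a.e.\ convergence holds, and then apply Fatou's lemma inside the expectation.
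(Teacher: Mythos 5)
Your proposal is correct and follows essentially the same route as the paper: bound $X$ and $Y$ via Lemmas 2.1--2.2, mollify $f$ and $l$ so that Corollary 3.2 gives $\E\{|Z^{\varepsilon}|^{*,p}_{0,t}\}\leq C_p(1+|x|^p)$ uniformly in $\varepsilon$, then use the $L^2(ds\otimes d\P)$ convergence from (\ref{pc16}) to extract an a.e.\ convergent subsequence and conclude by Fatou's lemma. Your closing remark about passing from suprema of the approximants to the essential supremum of the limit is a reasonable elaboration of a step the paper leaves implicit, and does not change the argument.
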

\begin{proof}
In the light of the corollary $\ref{col}$, we need only consider $p\geq
2$. By
Lemma $2.1$ and Lemma $2.2$ it follows that for any $p>0$ there exists $%
C_{p} >0$ such that
\begin{eqnarray}
\mbox{I\hspace{-.15em}E}\{|X|^{*,p}_{0,t}+|Y|^{*,p}_{0,t}\}\leq
C_{p}(1+|x|^{p}).  \label{ct2}
\end{eqnarray}
Next, by similar argument of Theorem $3.1$\,$(iii)$, we consider two sequences of smooth functions $\{f^{\varepsilon}\}_{%
\varepsilon}$ and $\{l^{\varepsilon}\}_{\varepsilon}$ with their first order derivatives in $(x,y,z)$ uniformly bounded in $t$ and $\varepsilon$ such that
\begin{eqnarray*}
\lim_{\varepsilon\rightarrow
0}\left\{\sup_{(t,x,y,z)}|f^{\varepsilon}(t,x,y,z)-f(t,x,y,z)|+ \sup_{x}|l^{\varepsilon}(x)-l(x)|\right\}=0.
\end{eqnarray*}
Denoting $(X^{\varepsilon},Y^{\varepsilon},Z^{\varepsilon})$
the unique solution of the corresponding FBDSDEs and applying Corollary $\ref{col}$, we can find for any $p\geq 2$ a constant $C_{p}>0$, independent of $\varepsilon$%
, such that
\begin{eqnarray}
\mbox{I\hspace{-.15em}E}\left(|Z^{\varepsilon}|^{*,p}_{0,t}\right)\leq C_{p}(1+|x|^{p}).
\label{ct3}
\end{eqnarray}
Furthermore, by $(\ref{pc16})$ we know that $\displaystyle{\E\int_{0}^{t}|Z_{s}^{\varepsilon}-Z_{s}|^{2}ds\rightarrow 0}$ as $\varepsilon\rightarrow 0$. Thus, possibly along a sequence say $(\varepsilon_{n})_{n\geq 1}$ we have $\lim_{n\rightarrow \infty}Z^{\varepsilon_{n}}=Z\,\, ds\times d\P$-a.s. Applying Fatou's lemma and recalling $(\ref{ct3})$ we the obtain
\begin{eqnarray*}
\E\left\{ess \sup_{0\leq s\leq t}|Z_{s}|^{p}\right\}\leq C_{p}(1+|x|^{p})
\end{eqnarray*}
which leads to $(\ref{ct1})$, as desired.
\end{proof}

\section{ Representation theorem}
\setcounter{theorem}{0} \setcounter{equation}{0}
In this section we shall prove the first main theorem of the paper. This
theorem can be regarded as an extension of the nonlinear
Feynman-Kac formula obtained by Pardoux-Peng $\cite{PP4}$. It gives a
probabilistic representation of the gradient (rather than the solution itself) of the stochastic viscosity
solution, whenever it exists, to a quasi-linear parabolic stochastic
PDE. Unlike the cases studied in $(\ref{c1})$, in this section, our representation does not depend on the partial derivatives of the functions
$f,l $ and $g$. In this context such representation is the best tool for us to study the path regularity of the process $Z$ in
the BDSDE with non-smooth coefficients. For notational simplicity, we shall drop the superscript $^{t,x}$ from the solution $(X, Y, Z)$ of FBDSDE $(\ref{a1bis})$-$(\ref{a4})$.

To begin with, let us introduce the two important stochastic integrals that will play a key role in the representation:
\begin{eqnarray*}
M^{s}_{r}=\int_{r}^{s}[\sigma^{-1}(\tau,X_{\tau})\nabla
X_{\tau}]^{T}\downarrow dW_{\tau} \label{d1}
\end{eqnarray*}
and
\begin{eqnarray*}
N^{s}_{r}=\frac{1}{s-r}(M_{r}^{s})^{T}[\nabla X_{r}]^{-1},\,\,\,\ 0\leq r<s \leq t.  \label{d6}
\end{eqnarray*}
Let us recall that
\begin{eqnarray}
\E|M_{r}^{s}|^{2p}&\leq& C_{p}
\E\left(\int_{r}^{s}|\sigma^{-1}(\tau,X_{\tau})%
\nabla X_{\tau}|^{2}d\tau\right)^{p} \\
& \leq & C_{p}(s-r)^{p}\E\left(|\nabla
X_{\tau}|_{s,r}^{*,2p}\right)\leq C_{p}(s-r)^{p},  \notag
\label{d2}
\end{eqnarray}
where $C_{p}>0$ is a generic constant.\newline
An other hand, let us define the filtration $\displaystyle{{\bf G}^{t}=\left\{\mathcal{F}_{s}^{B}\otimes\mathcal{F}_{t}^{W},\,\,  0\leq s\leq t\right\}}$ which will play a important role in the proof of the continuity of the process $Z$ in the BDSDE.
\begin{lemma}
For any fixed $t\in [0,T]$ and any $H\in L^{\infty}(\mathbf{F}^{t},[0,T];\R)$ we have
\begin{description}
\item  $(i)$ $
\begin{array}{l}
\E|\int_{0}^{s}\frac{1}{s-r}H_{r}M_{s}^{r}dB_r|<+\infty
\end{array}
$

\item  $(ii)$ for $\P.a.e.,\,\omega \in \Omega $, the
mapping $
\begin{array}{l}
s\mapsto \int_{0}^{s}\frac{1}{s-r}H_{r}(\omega )M_{s}^{r}(\omega
)dB_r(\omega )
\end{array}
$
is continuous on $[0,t]$

\item  $(iii)$ for $\mbox{I\hspace{-.15em}P}.a.e.\ ,\omega \in \Omega $, the
mapping $
\begin{array}{l}
s\mapsto \mbox{I\hspace{-.15em}E}\{\int_{0}^{s}\frac{1}{s-r}%
H_{r}M_{s}^{r}dB_r/\mathcal{G}_{s}^{t}\}(\omega )
\end{array}
$ is continuous on $[0,t]$
\end{description}
\end{lemma}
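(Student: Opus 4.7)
The strategy throughout is to exploit the independence of $B$ and $W$ by conditioning on the full $W$-filtration $\mathcal{F}_T^W$. Once that is frozen, $M_r^s$ is deterministic in $\omega_1$ and, since $H$ is ${\bf F}^t$-adapted, the integrand $\frac{H_r M_r^s}{s-r}$ of the $dB$-integral is $\mathcal{F}_r^B$-adapted, so we are dealing with an ordinary It\^o integral in $B$ whose random coefficient depends only on $W$. Each of the three claims is then reduced to controlling a Brownian martingale whose quadratic variation carries the singular weight $\int_0^s |M_r^s|^2/(s-r)^2\,dr$, and the work lies in taming this singularity with the help of $\E|M_r^s|^{2p}\le C_p(s-r)^p$.

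For part $(i)$, I would apply the Burkholder--Davis--Gundy inequality at exponent $1$ conditionally on $\mathcal{F}_T^W$, bound $|H|$ by $\|H\|_\infty$, and then take total expectation. The resulting quantity is of the form $\E\bigl(\int_0^s |M_r^s|^2/(s-r)^2\,dr\bigr)^{1/2}$, for which I would use Minkowski's integral inequality together with the full family of moment bounds $\|M_r^s\|_{L^q}\le C_q(s-r)^{1/2}$ (rather than the bare $L^2$ bound), trading a power of $\|M_r^s\|_{L^q}$ against the singular weight $1/(s-r)$ to reduce matters to an integral $\int_0^s (s-r)^{-\alpha}\,dr$ with $\alpha<1$.

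For part $(ii)$ I would run a Kolmogorov--\v Centsov argument on $s\mapsto G_s:=\int_0^s \frac{H_rM_r^s}{s-r}\,dB_r$. For $s_1<s_2$ split
\begin{equation*}
G_{s_2}-G_{s_1}=\int_{s_1}^{s_2}\tfrac{H_r M_r^{s_2}}{s_2-r}\,dB_r+\int_0^{s_1}H_r\Bigl[\tfrac{M_r^{s_2}}{s_2-r}-\tfrac{M_r^{s_1}}{s_1-r}\Bigr]\,dB_r,
\end{equation*}
and use the decomposition $M_r^{s_2}=M_r^{s_1}+M_{s_1}^{s_2}$ to isolate one piece carrying the small factor $|s_2-s_1|$ through $\tfrac{1}{s_2-r}-\tfrac{1}{s_1-r}$ and another carrying the small increment $M_{s_1}^{s_2}$ (for which $\E|M_{s_1}^{s_2}|^{2p}\le C_p|s_2-s_1|^p$). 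Conditional BDG combined with the same Minkowski trick as in $(i)$ then produces $\E|G_{s_2}-G_{s_1}|^p\le C|s_2-s_1|^\beta$ for some $p\ge 1$ and $\beta>0$, which yields a continuous modification.

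Finally, part $(iii)$ is a conditional version of $(ii)$: the critical point is that $\mathbf{G}^t=(\mathcal{F}_s^B\otimes\mathcal{F}_t^W)_{0\le s\le t}$ is an honest filtration (unlike $\mathbf{F}^t$), so conditional Jensen transfers the $L^p$ increment estimate of $(ii)$ to the process $\E[G_s|\mathcal{G}_s^t]$, and one more Kolmogorov--\v Centsov step yields continuity. The principal obstacle throughout is the failure of the naive $L^2$-estimate, since $\E\int_0^s|M_r^s|^2/(s-r)^2\,dr=C\int_0^s dr/(s-r)=\infty$; one is therefore forced to leave the $L^2$ framework and base everything on conditional BDG at exponent $1$ together with Minkowski's integral inequality applied to the full family of moments of $M_r^s$, which is also precisely why in $(iii)$ one must pass from $\mathbf{F}^t$ to the genuine filtration $\mathbf{G}^t$ in order legitimately to condition.
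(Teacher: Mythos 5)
There is a genuine gap at the heart of your argument for $(i)$, and it propagates into $(ii)$. After applying (conditional) Burkholder--Davis--Gundy at exponent $1$ you must control $\E\left(\int_0^s |M_r^s|^2(s-r)^{-2}\,dr\right)^{1/2}$, and you propose to do this by Minkowski's integral inequality combined with the full family of moment bounds $\|M_r^s\|_{L^q}\leq C_q(s-r)^{1/2}$. But these bounds scale in exactly the same way for every $q$: if you run Minkowski at any exponent $p\geq 2$, you obtain $\int_0^s \|M_r^s\|^2_{L^{p}}(s-r)^{-2}\,dr\leq C\int_0^s (s-r)^{-1}\,dr=+\infty$, and the case $p=1$ cannot be treated by Minkowski at all since $x\mapsto x^{1/2}$ is concave. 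The singularity sits exactly on the logarithmically divergent borderline, and no choice of $q$ produces the integrable weight $\int_0^s(s-r)^{-\alpha}\,dr$ with $\alpha<1$ that you claim; higher moments of $M_r^s$ do not decay faster than $(s-r)^{q/2}$, so there is no power to ``trade''. This is not a presentational issue: the moment-wise route genuinely cannot close.

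The device the paper uses, and which is absent from your proposal, is pathwise rather than moment-wise. From $\E|M_{t_1}^{t_2}|^{2p}\leq C_p(t_2-t_1)^{p}$ and Kolmogorov's continuity criterion (quoted from Revuz--Yor) one extracts a single random variable $M^{*}=\sup_{0\leq t_1<t_2\leq t}|M_{t_1}^{t_2}|/(t_2-t_1)^{\alpha}$ with $\E(M^{*})^{2}<\infty$ for $\alpha<1/2$. This yields the domination $|M_r^s|\leq M^{*}(s-r)^{\alpha}$ simultaneously for all $r<s$ and almost every $\omega$, so the random factor is pulled out of the $dr$-integral \emph{before} any expectation is taken; the singular weight is then split between a deterministic integrable piece and the piece absorbed by $M^{*}$, and Cauchy--Schwarz separates $\|H\|_{\infty}$ from $M^{*}$. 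The same majorant is what drives $(ii)$, and for $(iii)$ the paper additionally uses the uniform integrability of the family $A_s$ together with the continuity of the $\mathbf{G}^{t}$-optional projection --- this last step does match the final part of your plan, and your observation that one must replace $\mathbf{F}^{t}$ by the true filtration $\mathbf{G}^{t}$ is correct. But without the uniform H\"older majorant $M^{*}$, every quantitative estimate in your scheme diverges.
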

\begin{proof}
First, for any $0\leq\tau < s\leq t $ we denote
\begin{eqnarray}
A^{s}_{\tau}=\left\{
\begin{array}{l}
\int_{\tau}^{s}\frac{1}{s-r}H_{r}M_{r}^{s}dr,\,\,\,\,\,\,\,\,\ 0\leq\tau<s \\
\\
0,\,\,\,\,\,\,\,\,\,\,\,\,\,\,\,\,\,\,\,\,\,\,\,\,\,\,\,\,\,\,\,\,\,\,\,\,\,%
\,\,\ \,\,\ if\,\ s=\tau.
\end{array}
\right.
\label{v1}
\end{eqnarray}
To simplify notation, when $\tau=0$ we denote $A^{s}_{0}=A_{s}$.
Further, let $\beta$ be such that $\alpha=1-2\beta<\frac{1}{2}$ and $\beta <1$. Consider the random
variable
\begin{eqnarray}
M^{\ast }=\sup_{0\leq t_{1}<t_{2}\leq t}\frac{|M_{t_{1}}^{t_{2}}|}{%
(t_{2}-t_{1})^{\alpha }};
\label{v0}
\end{eqnarray}
then by $(\ref{d2})$ and Theorem $2.1$ of Revuz-Yor $\cite{RZ}$, we see that $%
\displaystyle{\mbox{I\hspace{-.15em}E}[M^{\ast }]^{2}<+\infty }$.

To prove (i) we note that for any $0\leq \tau\leq s\leq t$ by Burkhölder-Gundy- Davis's inequality one has
\begin{eqnarray}
\E|A^{s}_{\tau}|&\leq & C\E\left(\int_{\tau}^{s}\left|\frac{H_{r}M^{s}_{r}}{s-r}\right|^{2}dr\right)^{1/2}\nonumber\\
&\leq& C\E\left(\int_{\tau}^{s}\frac{|H_{r}|^{2}}{(s-r)^{2\beta}}. \frac{|M^{s}_{r}|^{2}}{(s-r)^{2\alpha}}dr\right)^{1/2}\nonumber\\
&\leq& C\E\left(\int_{\tau}^{s}\left|\frac{|H_{r}}{(s-r)^{\beta}}\right|^{2}dr\right)^{1/2}M^{*}\nonumber\\
&\leq&C\E\left(\int_{\tau}^{s}\frac{dr}{(s-r)^{2\beta}}\right)^{1/2}\|H\|_{\infty}M^{*}=C(s-\tau)^{(1/2)-\beta}\E(\|H\|_{\infty}M^{*}),\label{01}
\end{eqnarray}
where $\|.\|$ denotes the norm of $L^{\infty}([0,T])$. Again letting $C>0$ be a generic constant depending only on $\beta$ and $T$, we have
\begin{eqnarray}
\E|A^{s}_{\tau}|&\leq& C\{\E\|H\|_{\infty}^{2}\}^{1/2}\{\E(M^{*})^{2}\}^{1/2}\nonumber\\
&\leq &C\|H\|_{L^{\infty}([0,T]\times\Omega)}\|M^{*}\|_{L^{2}(\Omega)}<\infty.\label{i}
\end{eqnarray}
Setting $\tau=0$ in $(\ref{i})$ we proved $(i)$.

To prove $(ii)$ let $\tau=0$ and observe that, in view of $(i),\, A_s$ is a stochastic integral for $0<s\leq t$. Consequently, the mapping $s\mapsto A_s$ is continuous on [0,t].
It remain to prove $(iii)$. In this fact, we remark that the right-hand side of the inequality $(\ref{01})$ (with $\tau=0$) is clearly in $L^{1}$; thus we check easily that the process $A$ is uniformly integrable. Therefore, by similar step in Ma and Zhang \cite{MZ} (see proof for $(iii)$ of Theorem 4.1) it follows that the ${\bf G}^{t}$-optional projection of $A$, denoting $^{o}A_s=\E(A_s|\mathcal{G}^{t}_{s}),\, s\in[0,t]$, has continuous path. This prove $(iii)$, whence the lemma.
\end{proof}
\begin{theorem}
Assume that the assumptions $(\mathbf{A1})$-$(\mathbf{A4})$ hold, and let $%
(X,Y,Z)$ be the adapted solution to FBDSDE $(\ref{a4})$-$(\ref{a1bis})$. Then
\begin{description}
\item  $\left( i\right) $ the following identity holds $\mbox{I%
\hspace{-.15em}P}$-almost surely:
\begin{eqnarray}
Z_{s} &=&\mbox{I\hspace{-.15em}E}\left\{
l(X_{0})N_{0}^{s}+\int_{0}^{s}f(r,X_{r},Y_{r},Z_{r})N_{r}^{s}dr
+\int_{0}^{s}g(r,X_{r},Y_{r})N_{r}^{s}dB_r|{\mathcal{F}}^{t}_{s}\right\}\sigma(s,X_{s})\nonumber\\
\forall\, 0\leq s\leq t;\label{d7}
\end{eqnarray}
\item  $\left( ii\right) $There exists a version of $Z$ such that for $%
\mbox{I\hspace{-.15em}P}$-a.e.\,\,$\omega \in \Omega $,\ the mapping $s\mapsto
Z_{s}(\omega )$ is continuous;
\item  $\left( iii\right) $ If in addition the functions $f$ and $l$ satisfy
assumptions of Theorem 3.1, then for all $(t,x)\in \lbrack 0,T]\times
\R^{d}$ it holds that
\begin{eqnarray}
\partial _{x}u(t,x) &=&\mbox{I\hspace{-.15em}E}\left\{
l(X_{0})N_{0}^{t}+\int_{0}^{t}f(s,X_{r},Y_{r},Z_{r})N_{r}^{t}dr+\int_{0}^{t}g(r,X_{r},Y_{r})N_{r}^{s}dB_r|\mathcal{F}
_{t}^{B}\right\}.\nonumber\\   \label{d8}
\end{eqnarray}
\end{description}
\end{theorem}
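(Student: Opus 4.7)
The strategy is to derive (iii) in the smooth case by Malliavin integration by parts, to deduce (i) in the smooth case from Theorem 3.1(iii) and the flow property, to extend (i) to the Lipschitz case by smooth approximation, and to obtain (ii) from Lemma 4.1.

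For the smooth case of (iii), I start from the representation of Theorem 3.1(i) and use the Malliavin--variational link of Lemma 2.3, which, in our backward-running setting, reads for $\tau\in[0,t]$,
\[D_{\tau}[l(X_0)] = \partial_x l(X_0)\,D_{\tau}X_0 = \partial_x l(X_0)\,\nabla X_0\,(\nabla X_{\tau})^{-1}\,\sigma(\tau, X_{\tau}),\]
so that $\partial_x l(X_0)\nabla X_0 = D_{\tau}[l(X_0)]\sigma^{-1}(\tau, X_{\tau})\nabla X_{\tau}$, a quantity that does not depend on $\tau$. Averaging this identity over $\tau\in[0,t]$ and applying the integration by parts formula of Lemma 2.4(i) to shift the Malliavin derivative onto a stochastic integral against $\downarrow dW$ converts the expression into $\tfrac{1}{t}l(X_0)(M_0^t)^T$ up to the appropriate matrix normalisation. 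The same device, combined with the chain rule $D_{\tau}[f(r,\Xi_r)] = \partial_x f\,D_{\tau}X_r + \partial_y f\,D_{\tau}Y_r + \partial_z f\,D_{\tau}Z_r$, handles the $f$- and $g$-contributions and produces the weights $\tfrac{1}{t-r}(M_r^t)^T$ on $f(r,\Xi_r)$ and $g(r,\Theta_r)$. Combined with the matrix factor $(\nabla X)^{-1}$ arising from the re-parameterisation of $M$ relative to the starting point of the flow, these terms assemble into $N_r^t$ and yield (iii) under the smoothness assumption.

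To obtain (i) in the smooth case, I apply (iii) at the initial datum $(s, X_s)$, using the flow identity $X_r^{s, X_s} = X_r^{t, x}$ (and the analogous identities for $Y$, $Z$, and the variational processes) for $0\leq r\leq s\leq t$, together with $Z_s = \partial_x u(s, X_s)\sigma(s, X_s)$ of Theorem 3.1(iii). The Lipschitz case of (i) then follows by the same smooth approximation of $f, l$ used in the proof of Theorem 3.1(iii): the uniform estimates of Corollary 3.2 and Theorem 3.3, combined with the $L^2$-bound on $M^{*}$ supplied by Lemma 4.1, permit passage to the limit in every term of the representation. Part (iii) in its full generality then reduces to (i) evaluated at $s = t$, divided by $\sigma(t, x)$, using $\mathcal{F}_t^t = \mathcal{F}_t^B$ and $X_t = x$.

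For (ii), set $\tilde Z_s$ equal to the right-hand side of (i). Its $dB$-piece is treated in the manner of Lemma 4.1(iii) (after absorbing the factor $(\nabla X_r)^{-1}$ into the integrand), its $dr$-piece follows by the same lemma with $dB_r$ replaced by $dr$, its $l(X_0)N_0^s$-piece by a direct argument based on the continuity of $s\mapsto M_0^s$, and the prefactor $\sigma(s, X_s)$ is continuous by $(\mathbf A1)$. Since $\tilde Z$ and $Z$ coincide a.e.\ by (i), $\tilde Z$ is the desired continuous modification. The main obstacle is the Malliavin integration by parts step of the first stage: the $W$-integral is a backward It\^o integral, the family $\mathbf F$ is not a filtration, and the target is a conditional expectation with respect to $\mathcal{F}_t^B$ alone; one must verify that the integration by parts formula of Lemma 2.4(i), stated on the full space $\D^{1,2}$, remains applicable in this backward and partially conditioned setting, and identify correctly the matrix normalisation that produces the kernel $N_r^s$. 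Once this is settled, the approximation and continuity arguments are direct adaptations of the Ma--Zhang technique to the doubly stochastic setting.
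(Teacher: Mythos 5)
Your proposal follows essentially the same route as the paper: establish the $N$-kernel representation in the smooth case by the Ma--Zhang Malliavin integration-by-parts computation (which the paper itself only cites rather than reproduces), pass to the Lipschitz case by mollification using the stability estimates of Section 3, and obtain continuity of $Z$ via Lemma 4.1 after replacing the non-filtration ${\bf F}^{t}$ by the filtration ${\bf G}^{t}$. The minor reordering (deriving $(i)$ from $(iii)$ via the flow property rather than differentiating the Feynman--Kac formula at time $s$ directly) is substantively equivalent, and the obstacle you flag is exactly the step the paper also delegates to Ma and Zhang.
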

\begin{proof}
Again we shall consider only the case $d=1$. We assume first that $l\in
C_{b}^{1}(\mbox{I\hspace{-.15em}R})$ and $f\in C_{b}^{0,1}([0,T]\times
\mbox{I\hspace{-.15em}R}^{3})$.
Using the nonlinear Feynman-Kac formula of Pardoux and Peng $\cite{PP4}$ we obtain that
for $0\leq s\leq t$,
\begin{eqnarray}
u(s,X_{s})=Y_{s} &=&\mbox{I\hspace{-.15em}E}\left\{
l(X_{0})+\int_{0}^{s}f(r,X_{r},Y_{r},Z_{r})dr+\int_{0}^{s}g(r,X_{r},Y_{r})dB_r|\mathcal{F}^{t}_{s}\right\}.   \label{9}
\end{eqnarray}
Similar arguments to those used in the work of Ma and Zhang $\cite{MZ}$, provide the following:
\begin{eqnarray*}
\partial_{x}u(s,X_{s})=\E\left\{l(X_{0})N_{0}^{s}
+\int_{0}^{s}f(r,X_r,Y_r,Z_r)N^{s}_{r}dr+\int_{0}^{s}g(r,X_r,Y_r)N^{s}_{r}dB_r|
{\mathcal{F}}^{t}_{s}\right\}. \label{q7}
\end{eqnarray*}
In particular, setting $s=t$ we obtain $(\ref{d8})$, this proves
$(iii)$.

We now consider the general case. First we fix
$s\in [0,t]$. For $\varphi = l, f$, let $\varphi^{\varepsilon}
\in C^{\infty}, \varepsilon > 0$, be the mollifiers of $\varphi$,
and let $(Y^{\varepsilon},Z^{\varepsilon})$ be the solution of the
BDSDE in $(\ref{a1bis})$ with coefficients
$(l^{\varepsilon},f^{\varepsilon},g)$. Then for each
$\varepsilon > 0$, as the previous we get
\begin{eqnarray}
Z^{\varepsilon}_{s} &=&\E\left\{
l^{\varepsilon}(X_{0})N_{0}^{s}+\int_{0}^{s}f^{\varepsilon}(r,X,Y^{\varepsilon}_r,Z^{\varepsilon}_r
)N_{r}^{s}dr+\int_{0}^{s}g(r,X,Y^{\varepsilon}_r)N_{r}^{s}dB_r|{\mathcal{F}}^{t}_{s}\right\}\sigma(s,X_{s}).\nonumber\\
\label{q8}
\end{eqnarray}
Passing to limit as $\varepsilon $ goes to zero in $(\ref{q8})$,
we get $(\ref{d7})$ $\P$-a.s., for each fixed $s\in [0,t]$.

We should note that to prove part $(i)$ we still need to show that $(\ref{d7})$ actually holds for all $s\in[0, T]$, $\P$-a.s., but it is easy to see that this will follow from part $(ii)$; that is, the process $Z$ has a continuous version. Thus it remain to prove only $(ii)$.
To do this we first note that
\begin{eqnarray}
Z_s &=&\E\left\{
l(X_{0})N_{0}^{s}+\int_{0}^{s}f(r,X_r,Y_r,Z_r
)N_{r}^{s}dr+\int_{0}^{s}g(r,X_r,Y_r)N_{r}^{s}dB_r|{\mathcal{G}}^{t}_{s}\right\}\sigma(s,X_{s}).\nonumber\\
\end{eqnarray}
Lemma 4.1 in \cite{MZ} and Lemma 4.1 imply that the mapping
\begin{eqnarray*}
s\mapsto \E\left\{\int_{s}^{T}f(r,X_r,Y_r,Z_r
)N_{r}^{s}dr+\int_{0}^{s}g(r,X_r,Y_r)N_{r}^{s}dB_r|{\mathcal{G}}^{t}_{s}\right\}
\end{eqnarray*}
is a.s. continuous on $[0,t]$. By the similar ideas used in Ma and Zhang \cite{MZ} replacing ${\bf F}^{t}$ which here is not a filtration by ${\bf G}^{t}$, it follows that the mapping $\displaystyle{s\mapsto\E\left\{l(X_{0})N_{0}^{s}|{\mathcal{G}}^{t}_{s}\right\}}$
is also continuous on $[0,t]$. Consequently, the right side of $(\ref{d7})$ is
a.s. continuous on $[0,t]$, and hence $(\ref{d7})$ holds for all
$s\in [0,t]$, $\P$-a.s., proving $(ii)$, whence the theorem.
\end{proof}
\begin{remark}
A direct consequence of Theorem 4.2 that might be useful in
application is the following improvement of Theorem 3.3: assume that
$({\bf A1})$ and  $({\bf A2})$ hold, then for all $p > 0,$ there
exists a constant $C_{p}>0$ depending only on $T, K$ and $p$ such
that
\begin{eqnarray}
\E\left\{|X|^{*,p}_{0,t}+|Y|^{*,p}_{0,t}+|Z|^{*,p}_{0,t}\right\}\leq
C_{p}(1+|x|^{p}) \label{q11}
\end{eqnarray}
Indeed, since by Theorem 4.1, $Z$ has a continuous version, thus
$(\ref{ct1})$ becomes $(\ref{q11})$
\end{remark}

\section {Discrete function case}
\setcounter{theorem}{0} \setcounter{equation}{0}
Let us recall that we have proved in Theorem $4.2$ that the process
$Z$ in the solution to the FBDSDE $(\ref{a4})$-$(\ref{a1bis})$ has continuous paths,
under the condition that the coefficients $f$ and $l$ are only
uniformly Lipschitz continuous. While such a result is already an
improvement of that of Pardoux and Peng \cite{PP4}, it still within
the paradigm of the standard FBDSDE in the literature, to wit, the
terminal condition of the BDSDE is of the form $l(X_{0})$ (see also
\cite{PP4}). In this section we consider the class of
BDSDEs whose terminal conditions are path dependent. More precisely,
we assume that the terminal condition of the BDSDE is the form
$\displaystyle{\xi=l(X_{t_{0}},X_{t_{1}},....,X_{t_{n}})}$, where
$0= t_{0}<t_{1}<....<t_{n}=t$ is any partition of $[0,t]$. We shall
prove a new representation theorem for the process $Z$, and will
extend the path regularity result to such a case.
\begin{theorem}
Assume that $({\bf A1})$-$({\bf A3})$ hold; and in ({\bf A3}), $l:R^{d(n+1)}\rightarrow\R$. Let $\pi:0=t_{0}<t_{1}<.....<t_{n}=t$ be
a given partition of $[0,t]$, and let $(X,Y,Z)$ be the unique
adapted solution to the following FBDSDE:
\begin{eqnarray}
X_{s}&=&x+\int^{t}_{s}b(r,X_{r})dr+\int^{t}_{s}\sigma(r,X_{r})dW_{r},\nonumber\\
Y_{s}&=&l(X_{t_{0}},X_{t_{1}},...,X_{t_{n}})+\int^{s}_{0}f(r,X_{r},Y_{r},Z_{r})dr
\label{c2}\\
&&+\int^{s}_{0}g(r,X_{r},Y_{r})dB_{r}-\int^{s}_{0}Z_{r}dW_{r},\,\,\, s\in[0,t].\nonumber
\end{eqnarray}
Then on each interval $(t_{i-1},t_{i}), i=1,....,n$, the
following identity holds:
\begin{eqnarray}
Z_{s}&=&\E\left\{l(X_{t_{0}},X_{t_{1}},...,X_{t_{n}})N^{s}_{t_{i-1}}+\int^{s}_{0}f(r,X_{r},Y_{r},Z_{r})N^{s}_{r\vee t_{i-1}}dr\nonumber\right.\\
&&\left.+\int^{s}_{0}g(r,X_{r},Y_{r})N^{s}_{r\vee t_{i-1}}dB_r|{\mathcal{F}}_{s}^{t}\right\}\sigma(s,X_{s}).
\;\;\;\; s\in(t_{i-1},t_{i}) \label{c3}
\end{eqnarray}
Further, there exists a version of process $Z$ that enjoys the
following properties:
\begin{description}
\item $(i)$ the mapping $s\mapsto Z_{s}$ is $a.s.$ continuous on
each  interval $(t_{i-1},t_{i}),\,\  i=1,.....,n$;
\item $(ii)$ limits $Z_{t_{i}^{-}}=\lim_{s\uparrow t_{i}}Z_{s}$
and $Z_{t_{i}^{+}}=\lim_{s\downarrow t_{i}}Z_{s}$ exist;
\item $(iii)$ $\forall p>0$, there exists a constant $C_{p}>0$
depending only on $T, K$ and $p$ such that
\begin{eqnarray}
\E|\Delta Z_{t_{i}}|^{p}\leq C_{p}(1+|x|^{p})\leq\infty. \label{c4}
\end{eqnarray}
\end{description}
Consequently, the process $Z$ has both càdlàg and càglàd version
with discontinuities $t_{0},...,t_{n}$ and jump sizes satisfying
$(\ref{c4})$
\end{theorem}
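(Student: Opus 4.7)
The plan is to follow the template of Theorem 4.2 essentially verbatim, but to perform the Malliavin-type integration by parts over the interval $[t_{i-1}, s]$ rather than $[0, s]$; the truncation $r \vee t_{i-1}$ in the weight $N^{s}_{r \vee t_{i-1}}$ appearing in $(\ref{c3})$ is a direct reflection of this localization.

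I would first treat the smooth case, assuming $l \in C^{1}_{b}(\R^{d(n+1)})$ and $f \in C^{0,1}_{b}$. Fix $i$ and $s \in (t_{i-1}, t_i)$. Splitting the BDSDE,
\begin{eqnarray*}
Y_s = Y_{t_{i-1}} + \int_{t_{i-1}}^{s} f(r, X_r, Y_r, Z_r)\,dr + \int_{t_{i-1}}^{s} g(r, X_r, Y_r)\,dB_r - \int_{t_{i-1}}^{s} Z_r \downarrow dW_r,
\end{eqnarray*}
and differentiating in $x$, one obtains a linear BDSDE for $(\nabla Y, \nabla Z)$ on $[t_{i-1}, s]$ whose ``initial'' datum at $t_{i-1}$ equals $\sum_{j=0}^{n} \partial_{x_j} l(X_{t_0},\ldots,X_{t_n})\,\nabla X_{t_j}$ plus the $f$-, $g$- and $Z$-variational integrals on $[0, t_{i-1}]$; all of these contributions are already ``frozen'' at $s = t_{i-1}$. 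Testing this equation against $M^{s}_{t_{i-1}}$ via the Clark-Hausman-Ocone formula (Lemma 2.4) together with Lemma 2.3, exactly as in the proof of Theorem 4.2 and its model in \cite{MZ}, removes all derivatives of $f$ and $g$ on $[t_{i-1}, s]$ and collapses the entire frozen $\nabla Y_{t_{i-1}}$ into a single scalar multiple of $N^{s}_{t_{i-1}}$ paired with $l(X_{t_0},\ldots,X_{t_n})$. This yields $(\ref{c3})$ in the smooth case, and a mollification argument identical to that of Theorem 4.2, using Lemmas 2.1--2.2 to pass to the limit, then extends the representation to the Lipschitz setting of $(\mathbf{A2})$.

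For $(i)$, continuity on each open $(t_{i-1},t_i)$ is immediate from $(\ref{c3})$ by applying Lemma 4.1 on $[t_{i-1}, t_i]$ (with the filtration $\mathbf{G}^{t}$ replacing $\mathbf{F}^{t}$ precisely as in the proof of Theorem 4.2\,$(ii)$). For $(ii)$, I would send $s \uparrow t_i$ in $(\ref{c3})$ and, separately, $s \downarrow t_i$ in the analogous representation on $(t_i, t_{i+1})$ (which uses $N^{s}_{t_i}$ and $N^{s}_{r \vee t_i}$); both limits exist in $L^{p}$ thanks to the moment bound $\E|M^{s}_{r}|^{2p} \leq C_p (s-r)^{p}$ recorded before Lemma 4.1, the bounds of Lemmas 2.1--2.2 and Remark 4.3, and the quasi-left-continuity of $\mathbf{G}^{t}$. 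For $(iii)$, the jump $\Delta Z_{t_i}$ is the difference of these two limiting conditional expectations, and its $L^{p}$-norm is controlled by combining the preceding estimates for $M, N, X, Y, Z, \nabla X, \nabla Y, \nabla Z$ with Burkh\"older-Davis-Gundy and H\"older inequalities, exactly as in Theorem 4.2 and Theorem 3.3, giving $(\ref{c4})$.

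The principal obstacle I anticipate is the bookkeeping in the smooth-case derivation: one must verify that the $n+1$ distinct contributions $\partial_{x_j} l(X_{t_0},\ldots,X_{t_n})\, \nabla X_{t_j}$ to $\nabla Y_{t_{i-1}}$ indeed collapse to a single term of the form $l(X_{t_0},\ldots,X_{t_n})\, N^{s}_{t_{i-1}}$ in the final representation. This is an algebraic consequence of each $\nabla X_{t_j}\,[\nabla X_{t_{i-1}}]^{-1}$ being measurable with respect to the Wiener variables integrated against in $M^{s}_{t_{i-1}}$ and therefore being absorbed into the test direction, but making the path-dependent chain rule rigorous after mollification, and checking that the same collapse survives the limit to merely Lipschitz $l$, is where the genuinely new work lies.
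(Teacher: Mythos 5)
The core of your derivation of (\ref{c3}) contains a genuine gap. You differentiate the BDSDE in $x$ and work with the flow derivative $\nabla Y$, whose datum is the full sum $\sum_{j=0}^{n}\partial_{x_j}l(X_{t_0},\dots,X_{t_n})\nabla X_{t_j}$, and you then assert that all $n+1$ of these contributions collapse into the single term $l(X_{t_0},\dots,X_{t_n})N^{s}_{t_{i-1}}$. Neither step is right. First, for a path-dependent terminal value the identity $Z_s=\nabla Y_s(\nabla X_s)^{-1}\sigma(s,X_s)$ fails; the paper instead applies the Malliavin derivative to the equation and uses $Z_s = D_sY_s$ together with Lemma 2.3, so that $D_sX_{t_j}=\nabla X_{t_j}(\nabla X_s)^{-1}\sigma(s,X_s)\mathbf{1}_{\{s\le t_j\}}$ automatically annihilates every term with $t_j<s$ and leaves only $\sum_{j\ge i}\partial_j l\,\nabla X_{t_j}$, i.e. $Z_s=\nabla^{i}Y_s(\nabla X_s)^{-1}\sigma(s,X_s)$ with the \emph{truncated} variational process $\nabla^{i}Y$ as in (\ref{c9}). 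Second, the claimed collapse is false for the same reason: the duality $\E[\xi\,\delta(u)]=\E\int\langle D_\tau\xi,u_\tau\rangle\,d\tau$ applied over $[t_{i-1},s]$ yields $\E\{l\,N^{s}_{t_{i-1}}\mid\cdot\}=\E\{\sum_{j\ge i}\partial_jl\,\nabla X_{t_j}(\nabla X_{t_{i-1}})^{-1}\mid\cdot\}$ only; the terms with $j<i$ are simply not recovered, and the measurability of $\nabla X_{t_j}(\nabla X_{t_{i-1}})^{-1}$ that you invoke does not change this. Indeed, a derivation in which the full flow derivative collapses uniformly into $l\,N^{s}_{t_{i-1}}$ would produce a $Z$ continuous across the $t_i$'s, contradicting parts $(ii)$--$(iii)$ of the very statement being proved: the jumps exist precisely because the index set $\{j\ge i\}$ changes as $s$ crosses $t_i$.

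This error propagates into your treatment of $(iii)$. The paper identifies $\Delta Z_{t_i}=-\alpha^{i}_{t_i}\sigma(t_i,X_{t_i})$ with $\alpha^{i}=\nabla^{i}Y-\nabla^{i+1}Y$, observes that $(\alpha^{i},\beta^{i})$ solves a linear BDSDE driven by the dropped datum $\partial_il\,\nabla X_{t_i}$, and obtains the $L^{p}$ bound from Lemma 2.2; without the truncated processes $\nabla^{i}Y$ you have no structural handle on the jump beyond crude one-sided estimates. The remaining ingredients of your plan (mollification to pass from $C^{1}_{b}$ to Lipschitz data, Lemma 4.1 with $0$ replaced by $t_{i-1}$, and the substitution of the filtration ${\bf G}^{t}$ for ${\bf F}^{t}$) do match the paper.
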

\begin{proof}
As before we will consider only the case $d = 1$, and we assume first that $f,\,l\in C^{1}_{b}$. Let us first establish the identity $(\ref{c3})$. We fix an arbitrary index $i$ and consider the interval $(t_{i-1},t_i)$. By virtue of the Malliavin operator $D$, Theorem 2.4 and the uniqueness of the adapted solution to BDSDE, we obtain
\begin{eqnarray}
Z_{s}&=&\sum_{j\geq
i}\partial_{j}lD_{s}X_{t_{j}}+\int^{s}_{0}[f_{x}(r)D_{s}X_{r}+f_{y}(r)D_{s}Y_{r}
+f_{z}(r)D_{s}Z_{r}]dr\nonumber\\
&&+\int^{s}_{0}[g_{x}(r)D_{s}X_{r}+g_{y}(r)D_{s}Y_{r}
+g_{z}(r)D_{s}Z_{r}]dB_{r}-\int^{s}_{0}D_{s}Z_{r}dW_{r}\nonumber\\
&=&\left\{\sum_{j\geq
i}\partial_{j}l\nabla X_{t_{j}}+\int^{s}_{0}[f_{x}(r)\nabla_{s}X_{r}
+f_{y}(r)\nabla^{i}Y_{r}
+f_{z}(r)\nabla^{i} Z_{r}]dr\right.\nonumber\\
&&\left.+\int^{s}_{0}[g_{x}(r)\nabla X_{r}+g_{y}(r)\nabla^{i}
Y_{r} +g_{z}(r)\nabla^{i} Z_{r}]dB_{r}-\int^{s}_{0}\nabla^{i}
Z_{r}dW_{r}\right\}(\nabla
X_{s})^{-1}\sigma(s,X_{s})\nonumber\\
&=&\nabla^{i}Y_{s}(\nabla X_{s})^{-1}\sigma(s,X_{s}),\,\,\,\,\,\,\
t_{i-1}<s<t_{i}.\label{c9}
\end{eqnarray}
Taking the conditional expectation $\E\{.|\mathcal{F}^{t}_{s}\}$ on
two sides of $(\ref{c9})$ we obtain
\begin{eqnarray}
Z_{s}&=&\E\left\{\sum_{j\geq
i}\partial_{j}l\nabla X_{t_{j}}+\int^{s}_{0}[f_{x}(r)\nabla_{s}X_{r}
+f_{y}(r)\nabla_{s}^{i}Y_{r}
+f_{z}(r)\nabla^{i} Z_{r}]dr|{\mathcal{F}}_{s}^{t}\right\}(\nabla
X_{s})^{-1}\sigma(s,X_{s}).\nonumber\\\label{c11}
\end{eqnarray}
The rest of the proof is similar to the BSDE case.
It is clear now that to prove the theorem we need only prove properties $(i)$-$(iii)$, which we will do. Note that $(i)$ is obvious, in light of Theorem 4.2 and thanks to representation $(\ref{c3})$. Property $(ii)$ is a slight variation of Lemma 4.1 and Lemma 4.1 of Ma and Zhang \cite{MZ}, with $0$ there being replaced by $t_{i-1}$, for each $i$. Therefore we shall only check $(iii)$.
To this end, we define $\displaystyle{\Delta Z_{t_{i}}=Z_{t_{i}+}-Z_{t_{i}-}}$. From $(\ref{c9})$ it not difficult to check that
\begin{eqnarray*}
Z_{t_{i}-}=\nabla^{i}Y_{t_{i}}[\nabla X_{t_{i}}]^{-1}\sigma(t_{i},X_{t_{i}})\,\,\,\,
Z_{t_{i}+}=\nabla^{i+1}Y_{t_{i}}[\nabla X_{t_{i}}]^{-1}\sigma(t_{i},X_{t_{i}}).
\end{eqnarray*}
Denoting $\displaystyle{\alpha^{i}_{s}=-(\nabla^{i+1}Y_{s}-\nabla^{i}Y_{s}),\,\, i=1,....,n,}$ we have
\begin{eqnarray}
\Delta Z_{t_{i}}=(\nabla^{i+1}Y_{s}-\nabla^{i}Y_{s})\sigma(t_{i},X_{t_{i}})
=-\alpha^{i}_{t_{i}}\sigma(t_{i},X_{t_{i}}).\label{L1}
\end{eqnarray}
Further, since $(\nabla^{i}Y,\nabla^{i}Z)$ denotes the adapted solution of
the following BDSDE
\begin{eqnarray*}
\nabla^{i}Y_{\tau}&=&\sum_{j\geq
i}\partial_{j}l\nabla X_{t_{j}}+\int^{\tau}_{0}[f_{x}(r)\nabla
X_{r} +f_{y}(r)\nabla^{i}Y_{r}
+f_{z}(r)\nabla Z_{r}]dr\nonumber\\
&&+\int^{\tau}_{0}[g_{x}(r)\nabla X_{r}+g_{y}(r)\nabla^{i} Y_{r}
+g_{z}(r)\nabla^{i} Z_{r}]dB_{r}-\int^{\tau}_{0}\nabla^{i}
Z_{r}dW_{r},\,\,\,\tau\in[t_{i-1},t],
\end{eqnarray*}
if we denote $\displaystyle{\beta{i}_{s}=-(\nabla^{i+1}Z_{s}-\nabla^{i}Z_{s})}$, then we have
\begin{eqnarray}
\alpha^{i}_{s}&=&\partial_{i}l\nabla_{t_{i}}+\int^{s}_{0}[f_{y}(r)\alpha^{i}_{r}+f_{z}(r)\beta^{i}_{r}]dr
+\int^{s}_{0}[g_{y}(r)\alpha^{i}_{r}+g_{z}(r)\beta^{i}_{r}]dB_{r}\nonumber\\&&-\int^{s}_{0}\beta^{i}_{r}dW_{r},\,\,\, s\in[0,t].
\label{L2}
\end{eqnarray}
So $(\alpha^{i},\beta^{i})$ is the adapted solution to the linear BDSDE $(\ref{L2})$. It follows by Lemma 2.2 that $\forall\, p>0$ there exists a $C_{p}>0$ such that $\displaystyle{\E\{|\alpha^{i}_{t_{i}}|^{p}\}\leq C_{p}}$. On the other hand the same estimate holds for $\sigma(s,X_{s})$ because of assumption $({\bf A1})$ and Theorem $3.3$; for $[\nabla X]^{-1}$ since it is solution of a appropriated SDE. It readily seen that $(\ref{c4})$ follows from $(\ref{L1})$ which prove $(iii)$.\newline
Finally, we note that when $f$ and $l$ are only Lipschitz, $(\ref{c3})$ still holds, modulo a standard approximation the same as that in Theorem $4.2$. Thus properties $(i)$ and $(ii)$ are obvious. To prove $(iii)$ we should observe that the standard approximation yield that $\displaystyle{\Delta Z^{\varepsilon}_{t_{i}}}\rightarrow\Delta Z_{t_{i}}$ a.s. So if $(\ref{c4})$ holds for $\Delta Z^{\varepsilon}_{t_{i}}$, then letting $\varepsilon\rightarrow 0$, $(\ref{c4})$ remains true for $\Delta Z_{t_{i}}$, according the Fatou's lemma; that end the proof.
\end{proof}

{\bf Acknowledgments}\newline
We are very grateful to the anonymous referees for their careful reading of the original manuscript and for many useful suggestions

\end{document}